\newtheorem{theorem}{Theorem}[section]
\newtheorem{proposition}[theorem]{Proposition}
\newtheorem{corollary}[theorem]{Corollary}
\newtheorem{lemma}[theorem]{Lemma}
\newtheorem{exmp}[theorem]{Example}
\newtheorem{definition}[theorem]{Definition}
\newtheorem*{question}{Question}
\newcommand{\abf}{\mathbf{a}}
\newcommand{\M}{\mathrm{M}}
\newcommand{\Id}{\mathrm{Id}}
\def\n{\mathbb{N}}
\def\z{\mathbb{Z}}
\def\q{\mathbb{Q}}
\def\r{\mathbb{R}}
\def\cc{\mathbb{C}}
\def\inv{^{-1}}
\definecolor{codegreen}{rgb}{0,0.6,0}
\definecolor{codegray}{rgb}{0.5,0.5,0.5}
\definecolor{codepurple}{rgb}{0.58,0,0.82}
\definecolor{backcolour}{rgb}{0.95,0.95,0.92}
\lstdefinestyle{mystyle}{
    backgroundcolor=\color{backcolour},   
    commentstyle=\color{codegreen},
    keywordstyle=\color{magenta},
    numberstyle=\tiny\color{codegray},
    stringstyle=\color{codepurple},
    basicstyle=\ttfamily\footnotesize,
    breakatwhitespace=false,         
    breaklines=true,                 
    captionpos=b,                    
    keepspaces=true,                 
    numbers=left,                    
    numbersep=5pt,                  
    showspaces=false,                
    showstringspaces=false,
    showtabs=false,                  
    tabsize=2
}
\begin{document}

\title[]{Artinian Meadows}

\author[]{João Dias}
\author[]{Bruno Dinis}

\address[João Dias]{Departamento de Matemática, Universidade de Évora}
\email{joao.miguel.dias@uevora.pt}

\address[Bruno Dinis]{Departamento de Matemática, Universidade de Évora}
\email{bruno.dinis@uevora.pt}

\subjclass[2010]{13E10,16U90, 06B15 }

\keywords{Artinian meadows, Artinian  rings, local rings, decomposition of rings}

\begin{abstract}
We introduce the notion of Artinian meadow as an algebraic structure constructed from an Artinian ring which is also a common meadow, i.e.\ a commutative and associative structure with two operations (addition and multiplication) with additive and multiplicative identities and for which inverses are total. The inverse of zero in a common meadow is an error term $\abf$ which is absorbent for addition. We show that, in analogy with what happens with commutative unital Artinian rings, Artinian meadows  decompose as a product of local meadows in an essentially unique way. We also provide a canonical way to construct meadows from unital commutative rings.
\end{abstract}

\maketitle

\section{Introduction}

Bergstra and Tucker introduced in \cite{10.1145/1219092.1219095} the notion of \emph{meadow} as
an algebraic structure with two operations (addition and multiplication) where both addition and multiplication are operations for which the inverses are total. This means in particular that in meadows one is allowed to divide by zero. At the time, the main idea was to view these structures as abstract data types given by equational axiomatizations \cite{bergstra2020arithmetical,Bergstra2008,BP(20),10.1145/1219092.1219095} so that one is able to obtain simple term rewriting systems which are easier to automate in formal reasoning  \cite{bergstra2020arithmetical,bergstra2023axioms}. 
(see also \cite{Bergstra_Tucker_2024, bergstra2024ringscommondivisioncommon} for other recent developments).

An alternative line of research was made possible due to a recently discovered connection between meadows and rings \cite{Dias_Dinis(23)} (see Theorem~\ref{T:DirectedLattice} below). In fact, for the purposes of this paper, and from a purely algebraic point of view, two different classes of meadows are relevant: \emph{common meadows}, introduced by Bergstra and Ponse in \cite{Bergstra2015} and \emph{pre-meadows with $\abf$}, introduced by the authors in \cite{Dias_Dinis(23)}.  In both pre-meadows with $\abf$ and common meadows the inverse of zero is an element (denoted $\abf$) that is absorbent for both operations. This is in contrast, for example, with involutive meadows where the inverse of zero is zero itself \cite{10.1145/1219092.1219095,Dinis_Bottazzi}. The interesting thing about pre-meadows with $\abf$ (and common meadows) is that they can be decomposed as disjoint unions of rings \cite{Dias_Dinis(23)}. When compared with common meadows, pre-meadows with $\abf$ take a step back by not requiring the existence of an inverse function. This turns out to be of relevance in trying to enumerate finite common meadows \cite{Dias_Dinis(24)}. In the finite case, the two classes are even more deeply related since if $P$, a pre-meadow with $\abf$ such that $0\cdot P$ is finite, and the partial order in $0\cdot P$ is a total order, then $P$ is actually a common meadow. Note that since zero is invertible, some usual algebraic properties are not satisfied. For example $0 \cdot x$ may not be equal to $0$.

After recalling all the necessary definitions and results about pre-meadows with $\abf$ and common meadows, in Section~\ref{S:Preliminaries}, 
we present in Section~\ref{S:MeadowstoRings} a canonical construction $M(R)$, of a pre-meadow with $\abf$, starting from a unital commutative ring $R$. We show that in case $R$ is Artinian, the pre-meadow with $\abf$ is actually a common meadow.

In Section~\ref{S:Decomposition} we introduce  a class of common meadows, called \emph{Artinian meadows}, in analogy with the similar notion in rings. We show a decomposition result for Artinian meadows in terms of \emph{local meadows} -- common meadows whose associated directed lattice have a single penultimate vertex. This decomposition is unique up to permutation.

\section{Preliminaries}\label{S:Preliminaries}
    In this section we recall some definitions and results on common meadows.

\begin{definition}
        A \emph{pre-meadow} is a structure $(P,+,-,\cdot)$ satisfying the following equations
    \begin{multicols}{2}
\begin{enumerate}
\item[$(P_1)$] $(x+y)+z=x+(y+z) $
\item[$(P_2)$] $x+y=y+x $ 
\item[$(P_3)$]  $x+0=x$ 
\item[$(P_4)$] $x+ (-x)=0 \cdot x$
\item[$(P_5)$] $(x \cdot y) \cdot z=x \cdot (y \cdot z)$ 
\item[$(P_6)$]  $x \cdot y=y \cdot x $
\item[$(P_7)$] $1 \cdot x=x$
\item[$(P_8)$] $x \cdot (y+z)= x \cdot y + x \cdot z$
\item[$(P_9)$] $-(-x)=x$
\item[$(P_{10})$] $0\cdot(x+y)=0\cdot x \cdot y$
\end{enumerate}
\end{multicols}

 Let $P$ be a pre-meadow and let $P_z:=\{x\in M\mid 0\cdot x=z\}$. We say that $P$ is a \emph{pre-meadow with $\abf$} if there exists a unique $z\in 0\cdot P$ such that $|P_z|=1$ (denoted by $\abf$) and $x+\abf=\abf$, for all $x\in P$. 
    A \emph{common meadow} is a pre-meadow with $\abf$ equipped with an inverse function $(\cdot)\inv$ satisfying
    \begin{multicols}{2}
\begin{enumerate}
\item[$(M_1)$] $x \cdot x^{-1}=1 + 0 \cdot x^{-1}$
\item[$(M_2)$]$(x \cdot y)^{-1} = x^{-1} \cdot y^{-1}$
\item[$(M_3)$] $(1 + 0 \cdot x)^{-1} = 1 + 0 \cdot x $
\item[$(M_4)$] $ 0^{-1}=\abf$
\end{enumerate}
\end{multicols}
\end{definition}
     
\begin{definition}\label{D:Morphism}
    Let $f:M\rightarrow N$ be a function. We say that $f$ is an \emph{homomorphism of (common) meadows} if $M,N$ are common meadows and for all $x,y\in M$
    \begin{enumerate}
        \item $f(x+y)=f(x)+f(y)$.
        \item $f(x\cdot y)=f(x) \cdot f(y)$.
        \item $f(1_M)=1_{N}$.        
    \end{enumerate}
\end{definition}

    \begin{definition}\label{D:LatticeRings}
        A \emph{directed lattice} of rings $\Gamma$ over a countable lattice $L$ consists on a family of commutative rings $\Gamma_i$ indexed by $i\in L$, such that $\Gamma_i$ is a unital commutative ring for all $i\in L\setminus \min(L)$ and $\Gamma_{\min(L)}$ is the zero ring, together with a family of ring homomorphisms $f_{i,j}:\Gamma_j\rightarrow\Gamma_i$ whenever $i<j$ such that $f_{i,j}\circ f_{j,k}=f_{i,k} $ for all $i<j<k$. 
    \end{definition}

Each pre-meadow has an associated directed lattice whose vertices are rings. The ring homomorphisms of the associated lattice are called \emph{transition maps}.

    \begin{exmp}
    Consider the following lattice $L$ over the set $\{1,2,3,4,5\}$
    \[\begin{tikzcd}
	& 0 \\
	1 & 2 & 3 \\
	& 4 \\
	& 5
	\arrow[from=1-2, to=2-1]
	\arrow[from=1-2, to=2-2]
	\arrow[from=1-2, to=2-3]
	\arrow[from=2-1, to=4-2]
	\arrow[from=2-2, to=3-2]
	\arrow[from=2-3, to=4-2]
	\arrow[from=3-2, to=4-2]
\end{tikzcd}\]
    A directed lattice $\Gamma$ over $L$  is a labeling of the vertices by unital commutative rings and a labeling of the arrows by unital ring homomorphisms. For instance, the following labelling
    
    \[\begin{tikzcd}
	& (\z)_0 \\
	{(\z_2)_1} & (\q)_2 & {(\z_3)_3} \\
	& (\r)_4 \\
	& {\{\abf\}}
	\arrow["\pi"', from=1-2, to=2-1]
	\arrow["{\iota_1}", from=1-2, to=2-2]
	\arrow["\rho", from=1-2, to=2-3]
	\arrow[from=2-1, to=4-2]
	\arrow["{\iota_2}", from=2-2, to=3-2]
	\arrow[from=2-3, to=4-2]
	\arrow[from=3-2, to=4-2]
\end{tikzcd}\]
where $\pi$ and $\rho$ are the projection homomorphisms to the quotients $\z/(2)\simeq \z_2$ and $\z/(3)\simeq \z_3$ and $\iota_1$ and $\iota_2$ are the inclusion homomorphisms (note that $\iota_2\circ \iota_1$ is the inclusion map of $\z$ into $\r$). The unlabelled arrows correspond to the unique map that exists to the trivial ring. Since this map is unique we chose to omit its label.
\end{exmp}

We denote by $R^{\times}$ the set of invertible elements of the ring $R$. The following theorem from \cite{Dias_Dinis(23)} provides a link between rings and common meadows.

\begin{theorem}\label{T:DirectedLattice}
        Let $\Gamma$ a directed lattice of rings over a lattice  $L$ such that, for all $i\in I$ and all $x\in \Gamma_i$ the set:
        $$I_x=\{j\in I\mid f_{j,i}(x)\in \Gamma_j^{\times}\}$$
        has a greatest element.
          Then $M=\bigsqcup_{i\in L}\Gamma_i$ with the operations 
          \begin{itemize}
                \item $x+_My=f_{i\wedge j,i}(x)+_{i\wedge j}f_{i\wedge j,j}(y)$, where $+_{i\wedge j}$ is the sum in $\Gamma_{i\wedge j}$;
                \item $x \cdot_M y=f_{i\wedge j,i}(x)\cdot_{i\wedge j}f_{i\wedge j,j}(y)$, where $\cdot_{i\wedge j}$ is the product in $\Gamma_{i\wedge j}$.
            \end{itemize}
             is a common meadow such that $0\cdot M$ is lattice-equivalent to $L$.
\end{theorem}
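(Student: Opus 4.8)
The plan is to verify, in turn, the pre-meadow axioms $(P_1)$--$(P_{10})$, the existence of the distinguished element $\abf$, and then the inverse axioms $(M_1)$--$(M_4)$, before finally identifying $0\cdot M$ with $L$. Throughout I write $\ell(x)=i$ for the unique index with $x\in\Gamma_i$, so that by construction $\ell(x+_My)=\ell(x\cdot_My)=\ell(x)\wedge\ell(y)$. I take the global constants to be $0:=0_{\Gamma_{\top}}$ and $1:=1_{\Gamma_{\top}}$, where $\top=\max L$, and I define negation componentwise by $-x:=-_{\Gamma_i}x$ for $x\in\Gamma_i$. The guiding principle for the algebraic axioms is a single reduction: in any of $(P_1)$--$(P_{10})$ every term appearing on either side lands in one common ring $\Gamma_m$, where $m$ is the meet of the indices of the variables involved; pushing each variable down to $\Gamma_m$ and repeatedly using that the transition maps are ring homomorphisms, together with the cocycle identity $f_{i,j}\circ f_{j,k}=f_{i,k}$, each equation collapses to the corresponding identity in the commutative unital ring $\Gamma_m$, where it holds. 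For instance, for $(P_1)$ with $x\in\Gamma_i$, $y\in\Gamma_j$, $z\in\Gamma_k$ and $m=i\wedge j\wedge k$, both $(x+y)+z$ and $x+(y+z)$ equal $f_{m,i}(x)+f_{m,j}(y)+f_{m,k}(z)$ computed in $\Gamma_m$, and associativity there finishes the job; the remaining axioms are analogous.

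For the distinguished element, I first record the computation $0\cdot x=f_{i,\top}(0_{\Gamma_\top})\cdot x=0_{\Gamma_i}\cdot x=0_{\Gamma_i}$ for $x\in\Gamma_i$, so that $0\cdot M=\{0_{\Gamma_i}\mid i\in L\}$ and $P_{0_{\Gamma_i}}=\{x\in M\mid 0\cdot x=0_{\Gamma_i}\}=\Gamma_i$. Hence $|P_{0_{\Gamma_i}}|=|\Gamma_i|$, which equals $1$ exactly when $\Gamma_i$ is the zero ring, i.e.\ only for $i=\min L$ (the remaining rings being nontrivial). This singles out $\abf:=0_{\Gamma_{\min L}}$, and since $i\wedge\min L=\min L$ for every $i$, the sum $x+\abf$ lands in the one-element ring $\Gamma_{\min L}$ and therefore equals $\abf$; thus $M$ is a pre-meadow with $\abf$.

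The substantial point is the inverse. Since $\Gamma_{\min L}$ is the zero ring, $f_{\min L,i}(x)=\abf$ is a unit there, so $\min L\in I_x$ and $I_x\ne\emptyset$; by hypothesis $m(x):=\max I_x$ exists, and I define $x\inv:=\bigl(f_{m(x),i}(x)\bigr)\inv$ computed in $\Gamma_{m(x)}$. The key structural fact is that $I_x$ is the principal down-set ${\downarrow}m(x)$: it is a down-set because a ring homomorphism sends units to units (if $f_{k,i}(x)\in\Gamma_k^\times$ and $k'\le k$ then $f_{k',i}(x)=f_{k',k}(f_{k,i}(x))\in\Gamma_{k'}^\times$), and a down-set with a greatest element is principal. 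Consequently, using that in a commutative ring a product is a unit iff both factors are, one gets $I_{xy}=I_x\cap I_y={\downarrow}m(x)\cap{\downarrow}m(y)={\downarrow}\bigl(m(x)\wedge m(y)\bigr)$, whence
\[
m(xy)=m(x)\wedge m(y).
\]
This identity is exactly what makes $(M_2)$ work: evaluating both $(xy)\inv$ and $x\inv\cdot y\inv$ at the common level $c:=m(x)\wedge m(y)$ and using that homomorphisms preserve inverses of units, both equal $\bigl(f_{c,i}(x)\bigr)\inv\bigl(f_{c,j}(y)\bigr)\inv$ in $\Gamma_c$. Axioms $(M_1)$, $(M_3)$ and $(M_4)$ are then direct: $1+0\cdot x=1_{\Gamma_i}$ gives $m(1+0\cdot x)=i$ and hence $(M_3)$; for $(M_1)$ both sides reduce to $1_{\Gamma_{m(x)}}$; and $I_0=\{\min L\}$ gives $0\inv=\abf$, which is $(M_4)$.

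Finally, for the lattice-equivalence, the map $i\mapsto 0_{\Gamma_i}$ is a bijection $L\to 0\cdot M$, and the computation $0_{\Gamma_i}+0_{\Gamma_j}=0_{\Gamma_{i\wedge j}}$ shows that it carries the meet of $L$ to addition in $0\cdot M$, hence is an isomorphism of lattices. The main obstacle is the inverse axiom $(M_2)$: everything hinges on the identity $m(xy)=m(x)\wedge m(y)$, and it is precisely here that the hypothesis that each $I_x$ has a greatest element is indispensable, since it is what upgrades the down-set $I_x$ to a principal down-set and lets the meet in $L$ govern the interaction of inverses.
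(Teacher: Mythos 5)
The paper itself offers no proof of this theorem --- it is quoted verbatim from \cite{Dias_Dinis(23)} --- so the only comparison available is with the standard argument, which is exactly what you give: verify the pre-meadow axioms by pushing every term into $\Gamma_{i\wedge j\wedge k}$ via the transition maps and the cocycle identity, identify $\abf$ with the element of the zero ring at $\min L$, define $x\inv$ as the ring inverse of $f_{\max I_x,\,i}(x)$ computed in $\Gamma_{\max I_x}$, and check $(M_1)$--$(M_4)$. Your proof is correct, and the two genuinely load-bearing observations are both present and justified: $I_x$ is a down-set because unital homomorphisms carry units to units, so the greatest-element hypothesis upgrades it to a principal down-set, and consequently $m(xy)=m(x)\wedge m(y)$, which is precisely what makes $(M_2)$ work. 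One implicit hypothesis you correctly surfaced rather than glossed over: $L$ must have a greatest element $\top$, where $0$ and $1$ are placed; the statement is silent on this, but it is forced (in any pre-meadow with $\abf$ the element $0$ is the maximum of $0\cdot M$), and every lattice appearing in the paper has it.
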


\begin{exmp}
    Consider the following directed lattice of rings
    \[\begin{tikzcd}
	& {(\z)_0} \\
	{(\z)_1} && {(\q)_2} \\
	& {\{\abf\}}
	\arrow[from=1-2, to=2-1]
	\arrow[from=1-2, to=2-3]
	\arrow[from=2-1, to=3-2]
	\arrow[from=2-3, to=3-2]
\end{tikzcd}\]
    Let $x\in (\z)_0$ and $y\in (\q)_2$. Then by Theorem \ref{T:DirectedLattice} we have that $x+_M y= x+y \in (\q)_2$, since $2=1\wedge 2$. In the figure, the sum (and the product) of two elements can be calculated by r ``going down" on the lattice until the vertex where they first meet. In particular, the sum (and the product) of any element of $(\z)_1$ with any element of $(\q)_2$ will always give $\abf$.
\end{exmp}
        From now on, and to increase readability, we will drop the index in $+_M$ and $\cdot_M$.  We will also often drop the indices in the labels of the vertices.
        
    \begin{corollary}\label{C:DirectedLattice}
    Let $\Gamma$ be a directed lattice of rings over a lattice $L$. Then, there exists $M=\bigsqcup_{i\in I}\Gamma_i$, a pre-meadow with $\abf$, such that the lattice $0\cdot M$ is equivalent to $L$.
    \end{corollary}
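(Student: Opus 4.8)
The plan is to equip $M=\bigsqcup_{i\in L}\Gamma_i$ with exactly the two operations introduced in Theorem~\ref{T:DirectedLattice}, namely $x+y=f_{i\wedge j,i}(x)+f_{i\wedge j,j}(y)$ and $x\cdot y=f_{i\wedge j,i}(x)\cdot f_{i\wedge j,j}(y)$ for $x\in\Gamma_i$, $y\in\Gamma_j$, and to verify directly that $(M,+,-,\cdot)$ is a pre-meadow with $\abf$. The observation guiding the whole argument is that these formulas refer only to the meet of $L$ and to the transition maps $f_{i,j}$; neither ingredient involves the hypothesis that the sets $I_x$ have a greatest element. That hypothesis is used in Theorem~\ref{T:DirectedLattice} solely to define the total inverse $(\cdot)^{-1}$ and to check $(M_1)$--$(M_4)$, i.e.\ solely to upgrade a pre-meadow with $\abf$ to a common meadow. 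Removing it therefore costs us only the inverse function, which is precisely what the weaker conclusion allows us to drop. Well-definedness of $+$ and $\cdot$ is immediate, since every element of the disjoint union has a unique home ring and the outputs land in the single ring $\Gamma_{i\wedge j}$.

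First I would verify the equational axioms $(P_1)$--$(P_{10})$. The uniform mechanism is to transport every element occurring in an identity down to the meet of all the indices involved; there the identity becomes a genuine ring identity in a single $\Gamma_k$. The only facts needed are the commutative-ring axioms inside each $\Gamma_k$, the functoriality $f_{i,j}\circ f_{j,k}=f_{i,k}$ (which lets one compose transitions freely), and the associativity, commutativity and idempotence of $\wedge$ (which force the meets appearing on the two sides of an identity to agree). For example, $(P_1)$ and $(P_5)$ reduce, after expanding both sides, to $i\wedge(j\wedge k)=(i\wedge j)\wedge k$ together with additivity and multiplicativity of the transition maps, while $(P_8)$ and $(P_{10})$ reduce to distributivity in the common target ring. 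None of these computations mentions the $I_x$ hypothesis, so they go through verbatim.

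Next I would pin down the distinguished elements, taking $0$ and $1$ to be the identities of the top ring $\Gamma_{\max(L)}$ and $\abf=0_{\Gamma_{\min(L)}}$. Since each $f_{i,j}$ is unital and $i\wedge\max(L)=i$, the identities $(P_3)$ and $(P_7)$ hold; and for $x\in\Gamma_i$ one computes $0\cdot x=f_{i,\max(L)}(0)\cdot x=0_{\Gamma_i}$. Hence $0\cdot M=\{0_{\Gamma_i}\mid i\in L\}$, the map $i\mapsto 0_{\Gamma_i}$ is an order isomorphism onto $0\cdot M$, and this is the asserted equivalence with $L$. Moreover $M_z=\{x\mid 0\cdot x=z\}$ equals $\Gamma_i$ exactly when $z=0_{\Gamma_i}$, because distinct rings have distinct zeros in the disjoint union. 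Finally $x+\abf=\abf$ follows from $i\wedge\min(L)=\min(L)$ and the fact that $\Gamma_{\min(L)}$ is a singleton.

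The one point deserving care — and the natural place to suspect the dropped hypothesis might be needed — is the uniqueness clause in the definition of a pre-meadow with $\abf$, namely that $\abf$ is the \emph{only} $z\in 0\cdot M$ with $|M_z|=1$. I would make explicit that this rests entirely on the structural stipulation in Definition~\ref{D:LatticeRings} that $\Gamma_{\min(L)}$ is the zero ring while every other $\Gamma_i$ is a genuinely unital ring, hence has at least two elements; consequently $|M_{0_{\Gamma_i}}|=|\Gamma_i|\ge 2$ for $i\neq\min(L)$ and $|M_{\abf}|=1$. As this uses no invertibility condition, the argument is independent of the greatest-element hypothesis, and $M$ is a pre-meadow with $\abf$ as claimed.
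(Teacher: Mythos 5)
Your proof is correct and is essentially the paper's own argument: the corollary is stated without a separate proof precisely because it is the construction of Theorem~\ref{T:DirectedLattice} with the greatest-element hypothesis on the sets $I_x$ dropped, that hypothesis being needed only to define the total inverse and verify $(M_1)$--$(M_4)$. Your verification of $(P_1)$--$(P_{10})$ by transporting into $\Gamma_{i\wedge j}$, the identification $\abf=0_{\Gamma_{\min(L)}}$ with uniqueness coming from the non-minimal $\Gamma_i$ being nontrivial, and the order isomorphism $i\mapsto 0_{\Gamma_i}$ simply make explicit what the paper leaves implicit.
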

    \begin{definition}
        Let $P$ be a pre-meadow with $\abf$. We define a partial order on $0\cdot P$ as follows: $z\leq z'$ if $z\cdot z'= z$, for all $z,z'\in 0\cdot P$. 
    \end{definition}
        \begin{proposition}\label{P:Transitionmaps}
            Let $M$ be a pre-meadow with $\abf$. If $z,z'\in 0\cdot M$ are such that $z\leq z'$,  then the map
            \begin{align*}
                f_{z,z'}:M_{z'}&\rightarrow M_z\\
                        x&\mapsto x+z
            \end{align*}
            is a ring homomorphism.

            Moreover, if $z,z',z''\in 0\cdot M$ are such that $z''\leq z'\leq z$, then $f_{z,z'}\circ f_{z',z''}=f_{z,z''}$.
        \end{proposition}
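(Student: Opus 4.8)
The plan is to prove the three homomorphism conditions in turn, preceded by a well-definedness check, and then to dispatch the cocycle identity by a short computation. Throughout I would lean on the basic facts about a pre-meadow with $\abf$ recorded in the preliminaries (and in \cite{Dias_Dinis(23)}): for every $w\in 0\cdot M$ one has $0\cdot w=w$ and $w\cdot w=w$; the set $0\cdot M$ is a meet-semilattice under multiplication with $w+w'=w\cdot w'$, so that $z\leq z'$ (i.e.\ $z\cdot z'=z$) gives $z+z'=z$ and in particular $z+z=z$; and each $M_w$ is a commutative ring whose additive identity is $w$ and, for $w\neq\abf$, whose multiplicative identity is $1+w$.

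First I would check that $f_{z,z'}$ actually lands in $M_z$. For $x\in M_{z'}$, applying $(P_{10})$ and $z\leq z'$,
\[
0\cdot(x+z)=0\cdot x\cdot z=z'\cdot z=z,
\]
so $x+z\in M_z$. The real engine of the argument is an absorption identity: if $z\leq 0\cdot x$, then $x\cdot z=z$. Indeed, writing $z=0\cdot z$ and using $(P_5)$ and $(P_6)$,
\[
x\cdot z=x\cdot(0\cdot z)=(0\cdot x)\cdot z=z,
\]
the final equality holding because $z\leq 0\cdot x$ means $z\cdot(0\cdot x)=z$. I expect everything else to reduce to this identity together with the semilattice facts above.

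Additivity would then be immediate from commutativity, associativity and $z+z=z$, since $(x+z)+(y+z)=(x+y)+z$. For multiplicativity I would expand, via $(P_8)$ and $(P_6)$,
\[
(x+z)\cdot(y+z)=x\cdot y+x\cdot z+z\cdot y+z\cdot z.
\]
As $x,y\in M_{z'}$ we have $0\cdot x=0\cdot y=z'\geq z$, so the absorption identity yields $x\cdot z=z\cdot y=z\cdot z=z$, and the right-hand side collapses to $x\cdot y+z=f_{z,z'}(x\cdot y)$, once more using $z+z=z$. Finally, $f_{z,z'}$ preserves the unit: the identity of $M_{z'}$ is $1+z'$, and
\[
f_{z,z'}(1+z')=(1+z')+z=1+(z'+z)=1+z,
\]
which is the identity of $M_z$ (here $z'+z=z$ because $z\leq z'$); the case $z=\abf$, where the target is the zero ring, is trivial.

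For the last assertion I would take the three elements ordered so that both transition maps are defined and compute, for $x\in M_{z''}$,
\[
(f_{z,z'}\circ f_{z',z''})(x)=f_{z,z'}(x+z')=(x+z')+z=x+(z'+z)=x+z=f_{z,z''}(x),
\]
using $z'+z=z$. The main obstacle will be the multiplicativity step: unlike additivity and the cocycle identity, which are formal semilattice manipulations, it genuinely requires the absorption identity $x\cdot z=z$, and this is precisely where the axioms $(P_5)$, $(P_6)$, $(P_{10})$ and the order structure on $0\cdot M$ must be combined.
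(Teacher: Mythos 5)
The paper itself states this proposition \emph{without proof} --- it is recalled in the preliminaries from \cite{Dias_Dinis(23)} --- so there is no in-paper argument to compare against; your proof stands or falls on its own, and it stands. The well-definedness check via $(P_{10})$ (namely $0\cdot(x+z)=0\cdot x\cdot z=z'\cdot z=z$), the absorption identity $x\cdot z=z$ whenever $z\leq 0\cdot x$, the three homomorphism conditions, and the composition computation are all correct. Two remarks. First, you invoke as background that $0\cdot M$ is a semilattice with $w+w'=w\cdot w'$, that its elements are idempotent, and that each fiber $M_w$ is a commutative ring with zero $w$ and unit $1+w$; these are indeed facts from \cite{Dias_Dinis(23)}, the same source as the proposition itself, and all are derivable from $(P_1)$--$(P_{10})$ (e.g.\ $0\cdot 0=0$ follows from $(P_{10})$ with $y=0$, $x=1$; and $w\cdot w=w$ follows by multiplying the identity $x+0\cdot x=x$ by $0$ and applying $(P_{10})$ again), so relying on them is legitimate here, though a fully self-contained proof would derive them. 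Second, you quietly fixed a defect in the statement: with the convention of the first part, $f_{z,z'}:M_{z'}\to M_z$ is defined when $z\leq z'$, so the composable configuration in the ``moreover'' clause is $z\leq z'\leq z''$ rather than the printed $z''\leq z'\leq z$; your reading (``ordered so that both transition maps are defined''), under which $f_{z,z'}\circ f_{z',z''}(x)=x+(z'+z)=x+z=f_{z,z''}(x)$, is the intended one.
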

        The following proposition is useful to show immediately that certain directed lattices are common meadows, since it entails that if all the vertices of the associated lattice of a pre-meadow with $\abf$ are fields, then it defines a common meadow.

        \begin{corollary}\label{P:VerticesAreFields}
            Let $P$ be a pre-meadow with $\abf$ such that  $P_{0\cdot z}$ is a field, for all $0\cdot z\in 0\cdot P$. Then $P$ is a common meadow.
        \end{corollary}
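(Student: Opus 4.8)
The plan is to exhibit $P$ as exactly the common meadow that Theorem~\ref{T:DirectedLattice} builds out of the directed lattice of rings associated with $P$, so that the whole content reduces to checking the single hypothesis of that theorem. First I would invoke the decomposition of a pre-meadow with $\abf$: writing $P_z=\{x\in P\mid 0\cdot x=z\}$, the structure $P$ is the disjoint union $\bigsqcup_{z\in 0\cdot P}P_z$ of the rings $P_z$, ordered by the relation on $0\cdot P$, with transition maps $f_{z',z}\colon P_z\to P_{z'}$, $x\mapsto x+z'$, for $z'\le z$, supplied by Proposition~\ref{P:Transitionmaps}. By hypothesis every $P_z$ with $z\ne\abf$ is a field, while the minimal vertex $P_{\abf}=\{\abf\}$ is the zero ring. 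Since this directed lattice reproduces $P$ under the construction of Theorem~\ref{T:DirectedLattice} (its sum and product being, by $(P_{10})$ and distributivity, exactly the operations defined there through meets and transition maps), it suffices to verify that for each $z\in 0\cdot P$ and each $x\in P_z$ the set $I_x=\{z'\le z\mid f_{z',z}(x)\in P_{z'}^{\times}\}$ has a greatest element.

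The key computation is to determine $I_x$ explicitly, and here the field hypothesis does all the work through the fact that a unital homomorphism from a field into a nonzero ring is injective. I would split into two cases. If $x=z$, i.e.\ $x$ is the zero of $P_z$, then $f_{z',z}(x)=z'$ is the zero of $P_{z'}$, which is a unit only when $P_{z'}$ is the zero ring, i.e.\ only for $z'=\abf$; hence $I_x=\{\abf\}$, with greatest element $\abf$. If instead $x\ne z$, then $x$ is a unit of the field $P_z$ (forcing $z\ne\abf$); for every $z'\le z$ with $z'\ne\abf$ the map $f_{z',z}$ is an injective homomorphism of fields, so $f_{z',z}(x)\ne z'$ and is therefore a unit of $P_{z'}$, while for $z'=\abf$ one has $f_{\abf,z}(x)=x+\abf=\abf$, the unique (invertible) element of the zero ring. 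Hence $I_x=\{z'\in 0\cdot P\mid z'\le z\}$ is the whole principal down-set of $z$, whose greatest element is $z$ itself.

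In either case $I_x$ has a greatest element, so the hypothesis of Theorem~\ref{T:DirectedLattice} is met and the disjoint union $\bigsqcup_{z}P_z$ — which is $P$ — is a common meadow, equipped with the inverse function produced by that theorem. The one delicate point, which I regard as the main obstacle to get right rather than a deep difficulty, is the correct treatment of the bottom vertex $\{\abf\}$: one must observe that $\abf$ is absorbent (so every image under $f_{\abf,z}$ collapses to $\abf$) and that in the zero ring the sole element counts as a unit, so that $\abf$ always belongs to $I_x$ and serves as the greatest element precisely in the degenerate case $x=z$. Everything else is a routine consequence of injectivity of field homomorphisms together with Proposition~\ref{P:Transitionmaps}.
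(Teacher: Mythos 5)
Your proof is correct and follows essentially the same route as the paper's: both verify the hypothesis of Theorem~\ref{T:DirectedLattice} by splitting into the cases $x\in 0\cdot P$ (where $I_x=\{\abf\}$) and $x\notin 0\cdot P$ (where $x$ is a nonzero, hence invertible, element of the field $P_{0\cdot x}$). Your extra computation that $I_x$ is the \emph{entire} down-set of $0\cdot x$ (via injectivity of field homomorphisms) is harmless but unnecessary --- since every element of $I_x$ lies below $0\cdot x$, it suffices to note, as the paper does, that $0\cdot x\in I_x$, which already makes it the greatest element.
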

        \begin{proof}
            Let $P$ be a pre-meadow with $\abf$ such that $P_{0\cdot z}$ is a field, for all $0\cdot z\in 0\cdot P$. Given an $x\in P\setminus 0\cdot P$, we have that if $x$ is invertible in $P_{0\cdot x}$, then the set $I_x$ has $0\cdot x$ as a maximal element. If $x\in 0\cdot P$ then $I_x=\{\abf\}$ which clearly has a maximal element. Hence $P$ is a common meadow by Theorem \ref{T:DirectedLattice}.
        \end{proof}

\section{From rings to Meadows}\label{S:MeadowstoRings}

   In this section we provide a canonical construction $M(R)$, of a pre-meadow with $\abf$, starting from a unital commutative ring $R$. If the ring is Artinian, then the pre-meadow with $\abf$ is shown to be a common meadow (see Theorem \ref{P:FunctorObject} below). We recall that a ring is said to be \emph{Artinian} if every descending sequence of ideals eventually stabilizes.

    Let $P$ be a pre-meadow with $\abf$ and $0\cdot z\in 0\cdot P$. Then there is a transition map $f_{0\cdot z,0}:P_0\to P_{0\cdot z}$ defined by $x\mapsto x+0\cdot z$, which by Proposition \ref{P:Transitionmaps} is a ring homomorphism. By the first isomorphism theorem  $P_0/\ker(f_{0\cdot z,0})$ is isomorphic to a subring of $P_{0\cdot z}$, namely the image of the map $f_{0\cdot z,0}$.

    A sort of inverse construction is also possible: starting with a unital commutative ring  $R$ and considering all the quotients of $R$ by its ideals. The following example illustrates this idea. 

    \begin{exmp}\label{E:Zideals}
       Recall that all the ideals $I$ of the integer ring $\z$ are of the form $I=(n)$, where $n\in \n$, i.e.\ the ideals are generated by a unique integer. The set $\{(n)\mid n\in\n\}$ is therefore ordered by inclusion (and divisibility), and so we can create a lattice of such ideals, partially represented below
        \[\begin{tikzcd}
	& (0) \\
	& \cdots \\
	& (6) \\
	(2) && (3) \\
	& (1)
	\arrow[from=1-2, to=2-2]
	\arrow[from=2-2, to=3-2]
	\arrow[from=3-2, to=4-1]
	\arrow[from=3-2, to=4-3]
	\arrow[from=4-1, to=5-2]
	\arrow[from=4-3, to=5-2]
\end{tikzcd}\]
    Note that $(2)+(3)=(1)= \z$, $(2)\cap (3)=(6)$ and $(0)=0$ (the trivial ring). Given this specific lattice we can construct the following directed lattice of rings
    \[\begin{tikzcd}
	& {\z/(0)} \\
	& \cdots \\
	& {\z/(6)} \\
	{\z/(2)} && {\z/(3)} \\
	& {\z/(1)}
	\arrow[from=1-2, to=2-2]
	\arrow[from=2-2, to=3-2]
	\arrow[from=3-2, to=4-1]
	\arrow[from=3-2, to=4-3]
	\arrow[from=4-1, to=5-2]
	\arrow[from=4-3, to=5-2]
\end{tikzcd}\]
    where the transition maps are the natural projections, and $\z/(1)\simeq 0$ and $\z/(0)\simeq\z$. However, this directed lattice does not define a common meadow. Indeed, the class  $[2]_m\in\z/(m)\simeq \z_m$ is invertible if and only if $2$ does not divide $m$. Since the set $J_{[2]_0}=\{m\in \n\mid 2\nmid m\}$ has no greatest element, by Theorem~\ref{T:DirectedLattice} the directed lattice is not associated with a common meadow.
    \end{exmp}

    Example \ref{E:Zideals} also shows that the constructed pre-meadow with $\abf$ is not necessarily a common meadow. However, that is always the case if we restrict ourselves to Artinian rings.  In order to show that, we first prove the following lemma.

\begin{lemma}\label{L:InverseIntersection}
    Let $R$ be a unital commutative ring, and $I,J\trianglelefteq R$ be ideals of $R$. Let $x\in R$. Then the following are equivalent:
    \begin{enumerate}
        \item $x+I$ and $x+ J$ are invertible in the rings $R/I$ and $R/J$, respectively.
        \item $x+I\cap J$ is invertible in the ring $R/I\cap J$.
    \end{enumerate}
\end{lemma}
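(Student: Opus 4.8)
The plan is to prove the equivalence by establishing both implications, working with the characterization of invertibility in quotient rings in terms of ideals. Recall that an element $\bar{a}\in R/K$ is invertible if and only if $(a)+K=R$, equivalently $1\in (a)+K$, i.e.\ there exist $r\in R$ and $k\in K$ with $ra+k=1$. This reformulation turns the problem about quotient rings into a problem about sums of ideals in $R$, which should be cleaner to manipulate.

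For the direction $(2)\Rightarrow(1)$, I would argue that since $I\cap J\subseteq I$ and $I\cap J\subseteq J$, invertibility modulo the smaller ideal $I\cap J$ forces invertibility modulo the larger ideals. Concretely, if $x+(I\cap J)$ is invertible in $R/(I\cap J)$, then $(x)+(I\cap J)=R$; since $I\cap J\subseteq I$ we get $(x)+I=R$, so $x+I$ is invertible in $R/I$, and symmetrically for $J$. Alternatively and equivalently, the quotient maps $R/(I\cap J)\twoheadrightarrow R/I$ and $R/(I\cap J)\twoheadrightarrow R/J$ are surjective ring homomorphisms, and ring homomorphisms send units to units; this is the slick way to see this implication.

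The main work is in the direction $(1)\Rightarrow(2)$, which is where I expect the real obstacle to be. Here I am given $(x)+I=R$ and $(x)+J=R$ and must deduce $(x)+(I\cap J)=R$. The natural approach is to pick witnesses: choose $r,s\in R$, $i\in I$, $j\in J$ with $rx+i=1$ and $sx+j=1$. Multiplying these two equations gives $1=(rx+i)(sx+j)=(rsx+rj+si)x+ij$, so modulo the product ideal $IJ$ the element $x$ becomes invertible. The key point is then that $IJ\subseteq I\cap J$ always holds, so $1\in (x)+IJ\subseteq (x)+(I\cap J)$, yielding $(x)+(I\cap J)=R$ as desired. Thus the crux is the multiplicative trick combined with the containment $IJ\subseteq I\cap J$, which neatly avoids any need for coprimality hypotheses on $I$ and $J$.

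I would then assemble these two implications into the stated equivalence. The only genuinely delicate step is making sure the algebraic identity after multiplying the two witness equations is organized correctly so that the residual term lands in $IJ$; once that is checked, the containment $IJ\subseteq I\cap J$ does the rest. It is worth emphasizing in the writeup that no assumption like $I+J=R$ is needed, so the lemma holds for arbitrary pairs of ideals, which is presumably what makes it useful in the subsequent Artinian decomposition argument.
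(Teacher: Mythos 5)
Your proposal is correct and follows essentially the same route as the paper: the easy direction via the projection homomorphisms $R/(I\cap J)\twoheadrightarrow R/I$ and $R/(I\cap J)\twoheadrightarrow R/J$, and the hard direction by multiplying the two witness equations $rx+i=1$ and $sx+j=1$, which is exactly the paper's computation $(x\cdot y_1-1)\cdot(x\cdot y_2-1)\in I\cap J$ written from the other side. Your detour through the product ideal $IJ\subseteq I\cap J$ is only a cosmetic variation, since the paper obtains membership in $I\cap J$ directly from the absorption property of each ideal.
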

\begin{proof}
    Suppose first that $x+I$ and $x+ J$ are invertible in the rings $R/I$ and $R/J$, respectively. That is, there exist $y_1,y_2\in R$ such that $x\cdot y_1-1\in I$ and $x\cdot y_2-1\in J$. 

    Then 
    $$
       ( x\cdot y_1-1 )\cdot (x\cdot y_2-1)=x^2\cdot y_1\cdot y_2-x\cdot (y_1+y_2)+1=x\cdot (x\cdot y_1\cdot y_2 -(y_1+y_2) )+1 \in I\cap J.
    $$ 
    In particular, since $I\cap J$ is an ideal we have that
    $$-(x\cdot (x\cdot y_1\cdot y_2 -(y_1+y_2) )+1)=x\cdot (-x\cdot y_1\cdot y_2 +(y_1+y_2) )-1\in I\cap J .$$

    Hence $x+I\cap J$ is invertible, with inverse $(-x\cdot y_1\cdot y_2 +(y_1+y_2) )$.

    The other implication follows immediately from the existence of ring homomorphisms -- the projection maps -- from $R/I\cap J$ to $R/I$, and from $R/I\cap J$ to $R/J$.
\end{proof}

\begin{theorem}\label{P:FunctorObject}
    Let $R$ be a unital commutative ring. Then  $M(R)=\bigsqcup_{I\trianglelefteq R}R/I$, with the operations defined by
    \begin{itemize}
        \item $(x+I)+ (y+J)=(x+y)+I+J$
        \item $(x+I)\cdot (y+J)=(x\cdot y)+I+J$
        \item $-(x+I)=-x+I$,
    \end{itemize}
    is a pre-meadow with $\abf$. Furthermore, if $R$ is Artinian then $M(R)$ is a common meadow.
\end{theorem}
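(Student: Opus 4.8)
The plan is to exhibit $M(R)$ as an instance of the directed lattice of rings construction and then to apply the invertibility criterion of Theorem~\ref{T:DirectedLattice}. First I would index the quotients by the ideals of $R$ ordered by \emph{reverse} inclusion, so that the whole ring $R$ is the least element (with $R/R$ the zero ring) and $(0)$ the greatest (with $R/(0)\cong R$); in this order the meet of $I$ and $J$ is the sum $I+J$ and the join is the intersection $I\cap J$, so the ideals form a lattice. For $I\subseteq J$ the natural projection $R/I\to R/J$ is a ring homomorphism and these projections satisfy $f_{i,k}=f_{i,j}\circ f_{j,k}$, giving a directed lattice of rings. Unwinding Theorem~\ref{T:DirectedLattice}, the induced operations send $(x+I,\,y+J)$ to $(x+y)+(I+J)$ and $(xy)+(I+J)$, exactly as in the statement, so $M(R)$ is a pre-meadow with $\abf$ by Corollary~\ref{C:DirectedLattice}; its neutral element is $0+(0)$ and its error term is $\abf=0+R$, the unique element of $R/R$. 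If one prefers to avoid any cardinality assumption on the lattice of ideals, the axioms $(P_1)$--$(P_{10})$ and the $\abf$-condition can instead be verified directly, each reducing to the corresponding identity in an appropriate quotient.

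For the Artinian case, Theorem~\ref{T:DirectedLattice} reduces the claim to showing that, for every ideal $I$ and every $x\in R$, the set of indices $j$ at which the image of $x+I$ is invertible has a greatest element. Through the order-reversing indexing this is equivalent to proving that
\[\mathcal{J}=\{\,J\trianglelefteq R \mid I\subseteq J \text{ and } x+J\in (R/J)^{\times}\,\}\]
has a \emph{least} element under inclusion. The set $\mathcal{J}$ is nonempty, since $R\in\mathcal{J}$: the quotient $R/R$ is the zero ring, whose single element is a unit. The two ingredients that finish the argument are Lemma~\ref{L:InverseIntersection} and the descending chain condition. By Lemma~\ref{L:InverseIntersection}, $\mathcal{J}$ is closed under intersection, since $J_1,J_2\in\mathcal{J}$ makes $x+(J_1\cap J_2)$ invertible while $J_1\cap J_2\supseteq I$. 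Since $R$ is Artinian, the nonempty family $\mathcal{J}$ has a minimal element $J^{*}$, and closure under intersection then promotes minimality to leastness: for any $J\in\mathcal{J}$ we have $J\cap J^{*}\in\mathcal{J}$ and $J\cap J^{*}\subseteq J^{*}$, whence $J\cap J^{*}=J^{*}$ and $J^{*}\subseteq J$. Thus $J^{*}$ is the required least element, and $M(R)$ is a common meadow.

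The only genuinely delicate point I anticipate is keeping the two opposite orders straight -- inclusion of ideals against the lattice order on indices -- so that the greatest element demanded by Theorem~\ref{T:DirectedLattice} is correctly matched with the least element of $\mathcal{J}$; Example~\ref{E:Zideals} is a reliable sanity check, as there $\mathcal{J}=\{(m):2\nmid m\}$ for $x=2$ and $I=(0)$, whose infimum $(0)$ is not attained, in agreement with the failure of the criterion. The algebraic heart of the proof is light: once Lemma~\ref{L:InverseIntersection} exhibits $\mathcal{J}$ as closed under intersection, the passage from a minimal element (supplied by the Artinian hypothesis) to a least element is automatic.
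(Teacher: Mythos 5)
Your proof is correct and follows essentially the same route as the paper: the pre-meadow with $\abf$ structure is obtained from the directed-lattice picture (the paper simply verifies the axioms directly, which you also offer as a fallback), and the Artinian case combines Lemma~\ref{L:InverseIntersection} with the descending chain condition to produce the greatest element required by Theorem~\ref{T:DirectedLattice}. If anything, your promotion of a minimal element of $\mathcal{J}$ to a least element via closure under intersection makes explicit a step the paper leaves implicit, since it passes from ``unique maximal element'' to ``greatest element'' without comment.
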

\begin{proof}
    It is straightforward to check that the operations are well defined and that they define a pre-meadow with $\abf$.
  
    Suppose now that $R$ is Artinian and let us see that $M(R)$ is a common meadow. By Theorem \ref{T:DirectedLattice} it is enough to prove that the set $I_x$ has a greatest element, for all $x\in M(R)$. For each $x\in R$ consider the set
$$
    I_x=\{I\trianglelefteq R\mid x+I\in (R/I)^\times\}.
$$

Since $R$ is Artinian we have that $I_x$ has at least one maximal element (note that the partial order in $I_x$ coincides with the reversed partial order given by the inclusion). Suppose now that $I_x$ has at least two maximal elements $I,J\trianglelefteq R$. By Lemma \ref{L:InverseIntersection} we have that $I\cap J\in I_x$, and $I\cap J \geq I, J$. Then by the maximality of $I$ and $J$ we must have $I=I\cap J$ and $J=I\cap J$, and so $I=J$. Hence there exists a unique maximal element. To see that for all $I\trianglelefteq R$ and $x\in R$ the set $I_{x+I}$ has a greatest element is similar. We then conclude that $M(R)$ is a common meadow, by Theorem~\ref{T:DirectedLattice}.
\end{proof}

Let us illustrate Theorem~\ref{P:FunctorObject} with some examples.

\begin{exmp}
\begin{enumerate}
    \item 
 Consider the finite ring $\z_6$ (which is clearly Artinian). By Theorem~\ref{P:FunctorObject} the following directed lattice, where the transition maps are the quotient maps, represents a common meadow
\[\begin{tikzcd}
	& {\z_6/([0]_6)} \\
	{\z_6/([2]_6)} && {\z_6/([3]_6)} \\
	& {\z_6/([1]_6)}
	\arrow[from=1-2, to=2-1]
	\arrow[from=1-2, to=2-3]
	\arrow[from=2-1, to=3-2]
	\arrow[from=2-3, to=3-2]
\end{tikzcd}\]
   \item It is well-known that if $\Bbbk$ is a field, then the polynomial ring $\Bbbk[x]$ is not Artinian. In fact, $1+x$ is invertible in $\Bbbk[x]/I$ if and only if $I$ is not the zero ideal. Then $I_{1+x}$ is equivalent to the set of all non zero ideals of $\Bbbk[x]$, and since it is not Artinian $I_{1+x}$ does not have a greatest element. In particular, $M(\Bbbk[x])$ is not a common meadow.
     However, given any non-trivial ideal $I$ of the ring $\Bbbk[x]$, the quotient ring $\Bbbk[x]/I$ is Artinian. Take for instance $I=(x^2)$. Then the direct lattice associated with this ring

    \[\begin{tikzcd}
	& {\Bbbk[x]/(x^2)} \\
	{\Bbbk[x]/(x)} && {\Bbbk[x]/(x-1)} & \cdots \\
	& 0
	\arrow[from=1-2, to=2-1]
	\arrow[from=1-2, to=2-3]
	\arrow[from=1-2, to=2-4]
	\arrow[from=2-1, to=3-2]
	\arrow[from=2-3, to=3-2]
\end{tikzcd}\]
    represents a common meadow by Theorem~\ref{P:FunctorObject}.
    \item One can do a similar construction with finite abelian groups. Let $R$ be a unital commutative ring and $A$ a finite abelian group. The set of formal sums of elements in $A$ with coefficients in $R$
    $$R[A]=\left\{ \sum_{g\in A}r_g g\mid r_g\in R\right\}$$  
    is called the \emph{group algebra} of $A$ (see \cite{curtis1966representation} for more on  group algebras). The group algebra is a useful object when studying representation theory of groups. We have that if $H\trianglelefteq A$ is a (normal) subgroup, then the map 
    \begin{align*}
        \varphi:R[A]&\to R[A/N]\\
                \sum_{g\in A}r_g g &\mapsto\sum_{g\in A} r_ggN
    \end{align*}
    is a ring homomorphism, such that $\ker(\varphi)$ is the ideal generated by $\{h-1\mid h\in H\}$. Since $\varphi$ is clearly surjective we have that $$R[A]/(h-1\mid h\in H)\simeq R[A/H].$$ We can then define the common meadow $M_R(A)=(\bigsqcup_{H\leq A}R[A/N])\bigsqcup \{\abf\}$. Take for instance the cyclic group $\z_{12}$. Its subgroups are generated by $1,2,3,4$ and $0$ (the non-negative divisors of $12$) then $M_{\cc}(\z_{12})$ is defined by the following directed lattice:
    \[\begin{tikzcd}
	{\cc[\z_{12}]} \\
	{\cc[\z_4]} & {\cc[\z_6]} \\
	{\cc[\z_2]} & {\cc[\z_3]} \\
	\cc \\
	{\{\abf\}}
	\arrow[from=1-1, to=2-1]
	\arrow[from=1-1, to=2-2]
	\arrow[from=2-1, to=3-1]
	\arrow[from=2-2, to=3-1]
	\arrow[from=2-2, to=3-2]
	\arrow[from=3-1, to=4-1]
	\arrow[from=3-2, to=4-1]
	\arrow[from=4-1, to=5-1]
\end{tikzcd}\]
Note that if $m>0$ divides $12$ (i.e.\ $m=6,4,3,2,1$) we have that $m$ generates a subgroup $\langle m \rangle$ such that $\z_{12}/\langle m \rangle$ is isomorphic to $\z_{12/m}$. So the transition maps $\cc[\z_{12}]\to \cc[\z_{12/m}]$ are defined by $g\mapsto g\langle m \rangle$.

    \end{enumerate}
\end{exmp}

\begin{proposition}\label{P:MeadowsM0}
    Let $R$ and $S$ be commutative unital rings. Then $M(R)$ is isomorphic to $M(S)$ if and only if $M(R)_0=R$ is isomorphic to $M(S)_0=S$.
\end{proposition}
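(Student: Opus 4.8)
The plan is to prove the two implications separately. I first note that a bijective homomorphism in the sense of Definition~\ref{D:Morphism} is automatically an isomorphism, since its set-theoretic inverse again preserves $+$, $\cdot$ and $1$; so in both directions it suffices to exhibit a bijective homomorphism. For the direction where $R\cong S$, I would start from a ring isomorphism $\phi\colon R\to S$ and use that it induces a lattice isomorphism $I\mapsto\phi(I)$ of the ideal lattices, together with ring isomorphisms $R/I\to S/\phi(I)$ given by $x+I\mapsto\phi(x)+\phi(I)$. Assembling these componentwise defines a map $\Phi\colon M(R)\to M(S)$; checking that $\Phi$ respects the operations in Theorem~\ref{P:FunctorObject} then reduces to the identities $\phi(I+J)=\phi(I)+\phi(J)$ and $\phi(1_R)=1_S$, and bijectivity is inherited from $\phi$.

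The content is in the reverse implication. Given a bijective homomorphism $\Psi\colon M(R)\to M(S)$, the first step is to show $\Psi(0)=0$: since $\Psi$ is surjective and $x+0=x$ holds identically, $\Psi(0)$ is an additive identity of $M(S)$, and additive identities are unique. The second step is to recover the top component intrinsically. From the operations of Theorem~\ref{P:FunctorObject} one computes $0\cdot(x+I)=0_{R/I}$, and since distinct ideals index disjoint summands, this equals $0=0_{R/(0)}$ exactly when $I=(0)$; hence
\[
M(R)_0=\{\,w\in M(R)\mid 0\cdot w=0\,\}=R/(0)=R,
\]
and likewise for $S$. The key point is that this set is defined purely from $0$ and $\cdot$, both of which $\Psi$ respects (using $\Psi(0)=0$ for the former). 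Therefore $0\cdot w=0$ if and only if $0\cdot\Psi(w)=0$, so $\Psi$ maps $M(R)_0$ into $M(S)_0$; applying the same reasoning to $\Psi^{-1}$ shows that $\Psi$ restricts to a bijection $M(R)_0\to M(S)_0$.

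Finally I would observe that for $I=J=(0)$ the formulas of Theorem~\ref{P:FunctorObject} become $x+y$ and $x\cdot y$, so the meadow operations restrict to the ring operations of $R$ on $M(R)_0$, and similarly on $M(S)_0$. Consequently the restriction $\Psi|_{M(R)_0}$ is a bijective ring homomorphism $R\to S$, that is, a ring isomorphism, which completes the argument. I expect the main obstacle to be precisely this reverse direction, and within it the intrinsic characterization $M(R)_0=\{\,w\mid 0\cdot w=0\,\}$: one must be sure that the top component can be singled out using only the preserved data $0$ and $\cdot$, so that any meadow isomorphism is forced to carry it onto $M(S)_0$. Once this is in place the remaining steps are the routine componentwise verifications sketched above.
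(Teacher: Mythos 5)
Your proposal is correct and takes essentially the same route as the paper: the substantive direction is the same induced map $x+I\mapsto\phi(x)+\phi(I)$ built from a ring isomorphism $\phi\colon R\to S$ and the ideal correspondence $I\mapsto\phi(I)$, while the other direction---which the paper dispatches with ``in particular''---is exactly your observation that any meadow isomorphism must carry $M(R)_0$ onto $M(S)_0$. Your extra care in that direction (deriving $\Psi(0)=0$ from surjectivity and uniqueness of the additive identity, and the intrinsic characterization $M_0=\{w\mid 0\cdot w=0\}$, which is in fact the paper's definition of the component $P_0$) is sound and merely makes explicit what the paper leaves implicit.
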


\begin{proof}
If $M(R)$ is isomorphic to $M(S)$, then in particular $M(R)_0=R$ and $M(S)_0=S$ are isomorphic rings.

Suppose now that $\psi:R\to S$ is a ring isomorphism. If $I$ is an ideal of $R$, then $\psi(I)$ is an ideal of $S$. Hence  
\begin{align*}
    \psi'&:M(R)\to M(S)\\
    &x+I \mapsto \psi(x)+\psi(I)    
\end{align*}
is an isomorphism of common meadows.
\end{proof}

\begin{exmp}
    Proposition \ref{P:MeadowsM0} implies the existence of  pre-meadows with $\abf$, say $P$, which are completely determined by $P_0$. However this is not always the case. Take for example the common meadows $M$ and $M'$ associated with the following lattices
    \[\begin{tikzcd}
	& \z &&& \z \\
	\r && \z && \r \\
	& {\{\abf\}} &&& {\{\abf\}}
	\arrow["\iota", from=1-2, to=2-1]
	\arrow["{\Id_{\z}}"', from=1-2, to=2-3]
	\arrow["\iota"', from=1-5, to=2-5]
	\arrow[from=2-1, to=3-2]
	\arrow[from=2-3, to=3-2]
	\arrow[from=2-5, to=3-5]
\end{tikzcd}\]
    Even though $M_0$ and $M'_0$ are isomorphic,  
(as rings) $M$ and $M'$ are not isomorphic since they have different lattice structures.
\end{exmp}

\begin{definition}
    We say that a common meadow $M$ is \emph{Artinian} if it is isomorphic to $M(R)$, for some Artinian ring $R$.
\end{definition}

We now show that every surjective ring homomorphism between Artinian rings gives rise to an homomorphism of common meadows (Proposition~\ref{P:FunctorMorphism}). This homomorphism of common meadows can then be used to define a functor between a subcategory of Artinian rings and the category of common meadows (Corollary~\ref{T:Functor_Art}).

\begin{proposition}\label{P:FunctorMorphism}
    Let $f:R\to S$ be a surjective ring homomorphism of unital commutative Artinian rings. Then the map $\overline{f}:M(R)\to M(S)$  defined by $x+I\mapsto f(x) + f(I)$ is a common meadow homomorphism. 
\end{proposition}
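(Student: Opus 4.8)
The plan is to verify directly the three defining conditions of a homomorphism of common meadows from Definition~\ref{D:Morphism}, after first checking that $\overline{f}$ is well defined. Note at the outset that the Artinian hypothesis is used only to guarantee, via Theorem~\ref{P:FunctorObject}, that both $M(R)$ and $M(S)$ are genuinely common meadows, so that it even makes sense to speak of a \emph{homomorphism of common meadows}; the verification of the algebraic identities themselves will not use it.

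First I would establish well-definedness, which is where the hypothesis on $f$ does its work. The key point is that, since $f$ is \emph{surjective}, the image $f(I)$ of an ideal $I\trianglelefteq R$ is again an ideal of $S$: given $s\in S$ and $a\in I$, write $s=f(r)$ for some $r\in R$ and observe $s\cdot f(a)=f(ra)\in f(I)$ since $ra\in I$. Thus $S/f(I)$ is a bona fide quotient ring and $f(x)+f(I)$ is a legitimate element of $M(S)$. Independence of the coset representative is then immediate: if $x-x'\in I$ then $f(x)-f(x')=f(x-x')\in f(I)$, so $f(x)+f(I)=f(x')+f(I)$.

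Next I would record the identities on which everything rests, all consequences of $f$ being a unital ring homomorphism: $f(x+y)=f(x)+f(y)$ and $f(x\cdot y)=f(x)\cdot f(y)$ on elements, $f(I+J)=f(I)+f(J)$ on ideals (since every element of $I+J$ is $a+b$ with $a\in I$, $b\in J$ and $f(a+b)=f(a)+f(b)$, and conversely), and $f(1)=1_S$, $f((0))=(0)$. With these in hand the three conditions follow by unwinding the operations of Theorem~\ref{P:FunctorObject}. For addition, $\overline{f}\big((x+I)+(y+J)\big)=\overline{f}\big((x+y)+(I+J)\big)=f(x+y)+f(I+J)$, while $\overline{f}(x+I)+\overline{f}(y+J)=\big(f(x)+f(I)\big)+\big(f(y)+f(J)\big)=\big(f(x)+f(y)\big)+\big(f(I)+f(J)\big)$, and these agree in the common component $S/(f(I)+f(J))$ by the displayed identities. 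Multiplication is identical with $\cdot$ replacing $+$ on elements. For the unit, $1_{M(R)}=1+(0)$ and $\overline{f}\big(1+(0)\big)=f(1)+f((0))=1_S+(0)=1_{M(S)}$.

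The only genuinely load-bearing hypothesis is surjectivity: without it $f(I)$ need not be an ideal of $S$, the target component $S/f(I)$ would be undefined, and the map $\overline{f}$ would not even type-check. I therefore expect this to be the main (and essentially the only) obstacle; everything else is a routine transport of the pre-meadow operations through $f$, and I anticipate no further difficulty.
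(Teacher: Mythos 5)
Your proposal is correct and follows essentially the same route as the paper: the paper's proof is the single observation that a surjective ring homomorphism preserves ideals (so that $S/f(I)$ makes sense and $\overline{f}$ is well defined), with the remaining verifications declared immediate, and your write-up simply fills in those routine checks of the operations from Theorem~\ref{P:FunctorObject} and Definition~\ref{D:Morphism}. Your identification of surjectivity as the only load-bearing hypothesis matches the paper exactly, as confirmed by the paper's follow-up example with the non-surjective inclusion $\z\to\q$.
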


\begin{proof}
    The result is an immediate consequence of the fact that a surjective homomorphism preserves ideals.
\end{proof}

\begin{exmp}
    In Proposition \ref{P:FunctorMorphism}, the condition of $f:R\to S$ being surjective is necessary. In order to see this, consider for example the inclusion $i:\z\to \q$. Since $\q$ is a field, the only ideals are the trivial ones, there is no ring homomorphism from $\z/(m)\to \q$. 
\end{exmp}

Let $\mathrm{Ring^{Art}}$ be the category whose objects are unital commutative Artinian rings, and the morphisms are surjective ring homomorphisms and $\M_d$ be the category of common meadows (see \cite{Dias_Dinis(23)}). As a  consequence of Theorem \ref{P:FunctorObject} and Proposition~\ref{P:FunctorMorphism} we have the following result.

\begin{corollary}\label{T:Functor_Art}
There is a functor $M:\mathrm{Ring^{Art}}\to \M_d$ defined by $R\mapsto M(R)$.
\end{corollary}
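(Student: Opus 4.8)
The plan is to verify the two functorial axioms directly, leaning on the constructions already established. We have a candidate object map $R\mapsto M(R)$, which by Theorem~\ref{P:FunctorObject} sends each unital commutative Artinian ring $R$ to a genuine common meadow, so the object assignment is well defined on $\mathrm{Ring^{Art}}$ and lands in $\M_d$. For the morphism map, given a surjective ring homomorphism $f:R\to S$ between such rings, Proposition~\ref{P:FunctorMorphism} produces $\overline{f}:M(R)\to M(S)$, $x+I\mapsto f(x)+f(I)$, and asserts it is a common meadow homomorphism. Thus both halves of the data of a functor are already in place; what remains is to check that this assignment respects identities and composition.

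First I would check preservation of identities. For the identity morphism $\Id_R:R\to R$ (which is surjective, hence a morphism in $\mathrm{Ring^{Art}}$), the induced map $\overline{\Id_R}$ sends $x+I\mapsto \Id_R(x)+\Id_R(I)=x+I$, so $\overline{\Id_R}=\Id_{M(R)}$. Next I would check functoriality on composites. Given surjective homomorphisms $f:R\to S$ and $g:S\to T$, their composite $g\circ f:R\to T$ is again surjective, hence a valid morphism. For any coset $x+I\in M(R)$ we compute
\[
\overline{g\circ f}(x+I)=(g\circ f)(x)+(g\circ f)(I)=g(f(x))+g(f(I)),
\]
while
\[
\overline{g}\bigl(\overline{f}(x+I)\bigr)=\overline{g}\bigl(f(x)+f(I)\bigr)=g(f(x))+g(f(I)).
\]
These agree because $(g\circ f)(I)=g(f(I))$ as a set, so $\overline{g\circ f}=\overline{g}\circ\overline{f}$. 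One must also confirm that $\overline{f}$ is well defined as a map of the disjoint unions, i.e.\ that it sends the component $R/I$ into the correct component $S/f(I)$ of $M(S)$; this is exactly the content behind Proposition~\ref{P:FunctorMorphism}, where surjectivity guarantees that $f(I)$ is an ideal of $S$ and that the coset representative is respected modulo $f(I)$.

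The verification is essentially bookkeeping once the two supporting results are invoked, so I do not expect a serious obstacle. The one point deserving genuine care is the well-definedness of $\overline{f}$ on cosets, which is precisely where surjectivity of $f$ is used: if $f$ were not surjective, $f(I)$ need not be an ideal of $S$ and the target component would be ill-defined (the preceding example with $i:\z\to\q$ illustrates the failure). Since that subtlety is already absorbed into Proposition~\ref{P:FunctorMorphism}, the proof reduces to the two identities displayed above, and I would present it as a short paragraph citing Theorem~\ref{P:FunctorObject} for well-definedness of objects, Proposition~\ref{P:FunctorMorphism} for well-definedness of morphisms, and the direct computations for preservation of identities and composition.
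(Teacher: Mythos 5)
Your proposal is correct and follows the same route as the paper: the paper derives this corollary directly from Theorem~\ref{P:FunctorObject} (objects land in common meadows) and Proposition~\ref{P:FunctorMorphism} (surjective homomorphisms induce meadow homomorphisms), leaving the functorial axioms implicit. Your explicit verification that $\overline{\Id_R}=\Id_{M(R)}$ and $\overline{g\circ f}=\overline{g}\circ\overline{f}$ is exactly the routine bookkeeping the paper omits, so your write-up is, if anything, slightly more complete than the original.
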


\section{A decomposition theorem} \label{S:Decomposition}

The class of unital commutative Artinian rings is a well-behaved class of rings. For example, unital commutative Artinian rings admit a unique decomposition into unital commutative Artinian local rings (see e.g.\ \cite[Corollary 2.16]{eisenbud2013commutative}). For more on Artinian rings we refer to \cite{eisenbud2013commutative}. Motivated by this we introduce the notion of \emph{local common meadow} and show that (similarly with what happens in the case of Artinian rings)  there exists a class of common meadows, that we call \emph{Artinian meadows}, which admits a unique decomposition into local common meadows.

 A ring is \emph{local} if it has a unique maximal ideal. If $R$ is a local ring and $I$ and $J$ are ideals of $R$ such that $I+J=R$, then $I=R$ or $J=R$, since if $I$ and $J$ are proper ideals then they are contained in the unique maximal ideal of $R$. 
        
\begin{definition}
    Let $P$ be a pre-meadow with $\abf$. We say that $P$ is \emph{local} if $$x+y=\abf \to x=\abf \lor y=\abf,$$ for all $x,y \in P$.
\end{definition}

\begin{exmp}
\begin{enumerate}
    \item 
    Consider the following directed lattice
    \[\begin{tikzcd}
	& \cc \\
	\cc && \cc \\
	& \cc \\
	& {\{\abf\}}
	\arrow["\Id"', from=1-2, to=2-1]
	\arrow["\Id", from=1-2, to=2-3]
	\arrow["\Id"', from=2-1, to=3-2]
	\arrow["\Id", from=2-3, to=3-2]
	\arrow[from=3-2, to=4-2]
\end{tikzcd}\]
Since the vertices are labelled by fields, the directed lattice defines a common meadow by Corollary \ref{P:VerticesAreFields}. It is in fact a local meadow because the product of any two elements different from $\abf$ will never be mapped to $\abf$.
\item  The common meadow associated with the following directed lattice
\[\begin{tikzcd}
	& (\cc)_0 \\
	(\cc)_1 && (\cc)_2 \\
	& {\{\abf\}}
	\arrow["\Id"', from=1-2, to=2-1]
	\arrow["\Id", from=1-2, to=2-3]
	\arrow["\Id"', from=2-1, to=3-2]
	\arrow["\Id", from=2-3, to=3-2]
\end{tikzcd}\]
is not local since we can take $x \in (\cc)_1$ and $y \in (\cc)_2$, which are clearly different from $\abf$, and their sum $x+y$ is $\abf$.
\end{enumerate}
\end{exmp}

    We now turn to show that if $R$ is Artinian, the common meadow $M(R)$ admits a unique decomposition into local meadows. 
    
    \begin{theorem}\label{T:Main}
    Let $R=\bigoplus R_i$ be the decomposition of a unital commutative Artinian ring $R$ into local rings $R_i$. Then $M(R)$ decomposes into the product of local meadows of the form $M(R_i)$ and this decomposition is unique up to permutation.
\end{theorem}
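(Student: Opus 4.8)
The plan is to prove existence by transporting the structure theory of ideals in a finite product of rings up to the meadow level, and then to deduce uniqueness from the uniqueness of the ring decomposition $R=\bigoplus_i R_i$ (a standard fact, e.g.\ \cite[Corollary 2.16]{eisenbud2013commutative}) together with Proposition~\ref{P:MeadowsM0}. First I would set up the ideal correspondence. Since $R=\bigoplus_{i=1}^n R_i$, every ideal $I\trianglelefteq R$ factors uniquely as $I=\bigoplus_i I_i$ with $I_i\trianglelefteq R_i$, and $R/I\cong\bigoplus_i R_i/I_i$. This yields a bijection between the index set of $M(R)$ and the product of the index sets of the $M(R_i)$, under which an element $x+I$ of $M(R)$ (writing $x=(x_i)_i$) corresponds to the tuple $(x_i+I_i)_i$. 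I would then define
$$\Phi:M(R)\longrightarrow \prod_{i=1}^n M(R_i),\qquad \Phi(x+I)=(x_i+I_i)_{i=1}^n,$$
and verify that it is an isomorphism of common meadows. The verification is routine: by Theorem~\ref{P:FunctorObject} the operations of $M(R)$ are computed on representatives together with sums of ideals, and since sums and products of ideals in $\bigoplus_i R_i$ are taken componentwise, $\Phi$ intertwines $+$, $\cdot$ and $-$ with the componentwise operations on the right-hand side; it sends $1_{M(R)}$ to $(1_{M(R_i)})_i$ and is bijective by the ideal correspondence. Here the product $\prod_i M(R_i)$ carries the componentwise structure with error element $\abf=(\abf_1,\dots,\abf_n)$, and it is a common meadow either directly (the axioms $(M_1)$--$(M_4)$ are equational and hold coordinatewise, and $(\abf_1,\dots,\abf_n)$ is the unique singleton class of $0\cdot\big(\prod_i M(R_i)\big)$) or simply because it is isomorphic to the common meadow $M(R)$.

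Next I would check that each factor $M(R_i)$ is local. By Theorem~\ref{P:FunctorObject}, two elements $a+J,\,b+K$ of $M(R_i)$ satisfy $(a+J)+(b+K)=\abf$ exactly when $R_i/(J+K)$ is the zero ring, i.e.\ when $J+K=R_i$. Since $R_i$ is local, the remark preceding the definition of a local pre-meadow gives $J=R_i$ or $K=R_i$ (two proper ideals both lie in the unique maximal ideal, so their sum is proper), which means $a+J=\abf$ or $b+K=\abf$. Hence $M(R_i)$ is a local meadow, and combining this with the isomorphism $\Phi$ establishes the existence of the decomposition into local meadows of the form $M(R_i)$.

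For uniqueness I would pass back to rings using the top-ring assignment $N\mapsto N_0=\{m\in N\mid 0\cdot m=0\}$, which carries products to products: $\big(\prod_j N_j\big)_0\cong\prod_j (N_j)_0$, since $0\cdot(m_j)_j=0$ holds iff each coordinate lies in $(N_j)_0$. Thus any decomposition $M(R)\cong\prod_j N_j$ into local meadows induces a ring decomposition $R\cong M(R)_0\cong\prod_j (N_j)_0$. The crucial point is that each $(N_j)_0$ is a local ring and that $N_j\cong M\big((N_j)_0\big)$; granting this, non-locality of some $(N_j)_0$ would produce two comaximal maximal ideals, hence (via the vertices of $M((N_j)_0)$) two elements of $N_j$ different from $\abf$ whose sum is $\abf$, contradicting locality of $N_j$. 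Once each $(N_j)_0$ is known to be local, the uniqueness of the decomposition of the Artinian ring $R$ into local rings forces $\{(N_j)_0\}$ to coincide with $\{R_i\}$ up to permutation, and Proposition~\ref{P:MeadowsM0} upgrades this to $N_j\cong M\big((N_j)_0\big)\cong M(R_{\sigma(j)})$ for a permutation $\sigma$.

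The hard part will be exactly the reduction just invoked, namely that a product decomposition of the meadow $M(R)$ descends to a product decomposition of the ring $R$, equivalently that the direct factors of $M(R)$ are again Artinian meadows of the form $M(S)$ (so that all ideal-quotients are present and the comaximality argument above applies). For an abstract local meadow $N_j$ there is no a priori reason for $(N_j)_0$ to be local, so this step cannot be skipped. I would handle it by tracking idempotents: from an isomorphism $M(R)\cong N'\times N''$ the element $e=(1_{N'},0_{N''})$ lies in $M(R)_0=R$ (since $0\cdot e=0$) and is idempotent with complement $1-e$, giving a ring product $R=eR\times(1-e)R$; comparing this with the componentwise isomorphism $\Phi$ applied to $eR$ and $(1-e)R$ should identify $N'\cong M(eR)$ and $N''\cong M((1-e)R)$. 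Iterating this identification reduces the uniqueness of the meadow decomposition to the uniqueness of the ring decomposition into local rings, which completes the argument.
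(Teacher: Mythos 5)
Your proposal is correct, and while its existence half follows the paper's own path, your uniqueness argument takes a genuinely different route. For existence, your componentwise isomorphism $\Phi$ is exactly the paper's Theorem~\ref{L:MeadowDecomposition} (built on Proposition~\ref{T:StructureIdeals} and Lemma~\ref{L:PreMeadowProduct}), and your locality check for $M(R_i)$ via ``two proper ideals of a local ring cannot be comaximal'' is the direct form of what the paper proves through atoms (Lemmas~\ref{L:LocalAtom} and~\ref{L:LocalMeadow}). For uniqueness, the paper proceeds in two steps: an atom-counting lemma showing that every decomposition of an atomic common meadow into local meadows has the same number of factors (each local factor contributes exactly one atom), and then Proposition~\ref{P:UniqueDecomposition}, which first identifies the top rings $(M_i)_0$ with the $R_{\pi(i)}$ using uniqueness of the Artinian ring decomposition, and then upgrades this to meadow isomorphisms by chasing a commutative diagram relating the index-lattice bijection with the transition maps. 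You instead descend the meadow decomposition to the ring level via the idempotent $e=\Psi^{-1}(1_{N'},0_{N''})\in M(R)_0=R$, identify the factors as $N'\simeq M(eR)$ and $N''\simeq M((1-e)R)$, and let the classical ring theorem do all the work, including fixing the number of factors, so you never need the counting lemma. The one step you leave at ``should identify'' does go through: the composite $\Psi\circ\Phi^{-1}:M(eR)\times M((1-e)R)\to N'\times N''$ fixes $(1,0)$, hence fixes $(1,0)^{-1}=(1,\abf)$ (the inverse operation is determined by the underlying pre-meadow structure, so bijections preserving $+$, $\cdot$, $1$ preserve it), and multiplication by $(1,\abf)$ cuts out precisely the slices $M(eR)\times\{\abf\}$ and $N'\times\{\abf\}$, giving $N'\simeq M(eR)$; note that multiplying by the idempotent $(1,0)$ itself would not suffice, since $(1,0)\cdot(N'\times N'')=N'\times 0\cdot N''$ is too big. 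Your route buys a more self-contained, ring-theoretic proof that mirrors the classical argument for Artinian rings; the paper's route buys a standalone counting lemma for atomic meadows and stays inside its lattice framework.
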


\begin{definition}
    Let $L$ be a lattice. We say that an element $a\in L$ is an \emph{atom} if $0\neq a$ and
    \begin{equation*}
 \forall x\in L \,(0\leq x \leq a   \to (x=0 \lor x=a)).
    \end{equation*}
    We say that $L$ is \emph{atomic} if for each $x\in L$ there is an atom $a$ such that $a\leq x$. 
\end{definition}

In a common meadow $P$, an element of $0\cdot P$ is an atom if it is not equal to $\abf$ and there is no element between the atom and $\abf$. We will say that $P$, a pre-meadow with $\abf$,  is \emph{atomic} if $0\cdot P$ is an atomic lattice. It is easy to see that all finite pre-meadows with $\abf$ are atomic.

\begin{exmp}\label{E:NotAtomicMeadow}
    Not all common meadows are atomic. Take for example $M=(\q\times \n)\sqcup \{\abf\}$, with addition and multiplication defined by
    \begin{itemize}
        \item $(\alpha,x)+(\beta,y)=(\alpha+\beta,\max\{x,y\})$
        \item $(\alpha,x)\cdot (\beta,y)=(\alpha\cdot \beta,\max\{x,y\})$
        \item $(\alpha,x)+\abf=\abf+(\alpha,x)=\abf$
        \item $(\alpha,x)\cdot \abf=\abf \cdot (\alpha,x)=\abf$.
    \end{itemize}
     A simple calculation shows that $M$ is a common meadow with $0\cdot M = (0\times \n)\sqcup \{\abf\}$, where the order is the usual order in $\n$. The lattice $0\cdot M$ has no atom since $\n$ has no maximal element, and so $M$ is not atomic.
\end{exmp}

Even though not all pre-meadows with $\abf$ are atomic, the pre-meadows with $\abf$ of the form $M(R)$ have atomic lattices, since every unital commutative ring $R$ has at least one maximal ideal. Since every ideal of $R$ is contained in a maximal ideal, we have that $M(R)$ is atomic.

The following lemma gives a characterization of local common meadows.
\begin{lemma}\label{L:LocalAtom}
    Let $M$ be an  atomic common meadow, then $M$ is local if and only if $0\cdot M$ has a unique atom. 
\end{lemma}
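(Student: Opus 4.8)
The plan is to reduce locality of $M$ to a purely order-theoretic condition on the lattice $0\cdot M$ and then settle that condition by a short argument about atoms. Recall throughout that $\abf$ is the bottom of $0\cdot M$, that $M_{\abf}=\{\abf\}$, and that the meet in $0\cdot M$ is given by multiplication, $z\wedge z'=z\cdot z'$; I also assume $M$ is non-trivial, i.e.\ $0\cdot M\neq\{\abf\}$. The first step is to translate the defining condition of locality into $0\cdot M$. For $x,y\in M$, axiom $(P_{10})$ (together with $0\cdot 0=0$) gives $0\cdot(x+y)=(0\cdot x)\cdot(0\cdot y)=(0\cdot x)\wedge(0\cdot y)$, and since $\abf$ is absorbent with $M_{\abf}=\{\abf\}$, an element $w\in M$ satisfies $w=\abf$ exactly when $0\cdot w=\abf$. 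Combining these yields, for all $x,y\in M$,
\[
x+y=\abf\ \Longleftrightarrow\ (0\cdot x)\wedge(0\cdot y)=\abf,\qquad\text{and}\qquad x=\abf\ \Longleftrightarrow\ 0\cdot x=\abf.
\]
As every element of $0\cdot M$ has the form $0\cdot x$ (indeed $0\cdot z=z$ for $z\in 0\cdot M$), locality of $M$ becomes equivalent to the lattice statement
\[
(\ast)\qquad \forall z,z'\in 0\cdot M\quad\big(\,z\wedge z'=\abf\ \rightarrow\ z=\abf\ \lor\ z'=\abf\,\big).
\]

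It then remains to prove that, in an atomic lattice with bottom $\abf$, condition $(\ast)$ holds if and only if there is a unique atom. For the direction from a unique atom $a$ to $(\ast)$, I would argue by contraposition: if $z,z'\neq\abf$, atomicity places an atom below each, necessarily $a$, so $a\leq z$ and $a\leq z'$, whence $a\leq z\wedge z'$ and $z\wedge z'\neq\abf$. For the converse, assume $(\ast)$; non-triviality and atomicity give at least one atom, and if $a\neq b$ were two distinct atoms then $a\wedge b\leq a$ with $a\wedge b\neq a$ (otherwise $a\leq b$ forces $a=b$ since $b$ is an atom), so the atom property of $a$ forces $a\wedge b=\abf$, contradicting $(\ast)$. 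Hence the atom is unique, which together with the translation above closes the equivalence.

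The biconditionals in the translation step are routine; the genuine content is recognizing that locality collapses precisely to $(\ast)$. The point to handle with care is the backward lattice direction, where atomicity is used twice — first to guarantee that an atom exists at all (this is exactly why non-triviality of $M$ is needed, and why the degenerate case $0\cdot M=\{\abf\}$ must be excluded), and then to force two distinct atoms to have meet $\abf$. I expect no serious obstacle beyond stating these hypotheses cleanly.
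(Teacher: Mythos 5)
Your proof is correct and takes essentially the same route as the paper's: in one direction two distinct atoms would have meet (equivalently, sum) equal to $\abf$, contradicting locality, and in the other direction the unique atom sits below both $0\cdot x$ and $0\cdot y$, so their sum cannot be $\abf$. Your explicit reduction of locality to the lattice condition $(\ast)$, and your explicit non-triviality hypothesis (which rules out the degenerate meadow $M=\{\abf\}$, a case the paper silently ignores), are organizational refinements rather than a different argument.
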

\begin{proof}
    Suppose that $M$ is local but there exist two distinct atoms $0\cdot z,0\cdot w \in 0\cdot M$. Since they are atoms we must have  $0\cdot w + 0\cdot z = \abf$. Since the common meadow is local either $0\cdot w =\abf$ or $0\cdot z = \abf$, which contradicts the hypothesis that they are atoms. 

    Suppose now that $0\cdot M$ has a unique atom, say $0\cdot z$. Let $x,y\in M$ be such that $x+y=\abf$ and let us first assume that $x\neq \abf$ and $y\neq \abf$. Then, we must have $0\cdot z \leq 0\cdot x$ and $0\cdot z \leq 0\cdot y$ with $0\cdot z$ the unique atom of $0\cdot M$. In particular,  $0\cdot z \leq 0\cdot (x+y)=\abf$, and so $0\cdot z$ cannot be a atom. Hence  $x=\abf$ or $y=\abf$, and so $M$ is local.
\end{proof}

The condition of being atomic is essential to have the equivalence given in Lemma~\ref{L:LocalAtom}. Indeed, the common meadow in Example \ref{E:NotAtomicMeadow} is a local meadow which is not atomic since  the associated  lattice has no atoms.

The term \emph{local meadow} is justified by the following connection with local rings. 

\begin{lemma}\label{L:LocalMeadow}
    If $R$ is a commutative Artinian ring, then $R$ is a local ring if and only if $M(R)$ is a local common meadow.
\end{lemma}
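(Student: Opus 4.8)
The plan is to reduce the statement to the lattice-theoretic characterization of locality provided by Lemma~\ref{L:LocalAtom}, and then to translate the condition ``$0\cdot M(R)$ has a unique atom'' into the ring-theoretic condition ``$R$ has a unique maximal ideal''. Since $R$ is a commutative unital ring, every ideal is contained in a maximal ideal, so (as observed in the discussion preceding Lemma~\ref{L:LocalAtom}) the common meadow $M(R)$ is atomic; moreover $M(R)$ is a genuine common meadow by Theorem~\ref{P:FunctorObject}, because $R$ is Artinian. Hence Lemma~\ref{L:LocalAtom} applies, and it suffices to prove that $0\cdot M(R)$ has a unique atom if and only if $R$ is local.

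First I would describe the lattice $0\cdot M(R)$ explicitly. Using the operations of Theorem~\ref{P:FunctorObject} and the fact that $0=0+(0)$ lives in the bottom ring $M(R)_0=R$, one computes $0\cdot(x+I)=0+I$ for every $x+I\in R/I$. Thus $0\cdot M(R)=\{\,0+I\mid I\trianglelefteq R\,\}$, and since the components of the disjoint union are indexed by the ideals, the map $I\mapsto 0+I$ is a bijection between the ideals of $R$ and the elements of $0\cdot M(R)$. Next I would identify the partial order: for ideals $I,J$ one has $(0+I)\cdot(0+J)=0+(I+J)$, so $0+I\leq 0+J$ (that is, $(0+I)\cdot(0+J)=0+I$) holds exactly when $I+J=I$, i.e.\ when $J\subseteq I$. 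Consequently $I\mapsto 0+I$ is an order-reversing bijection from $(\{\text{ideals of }R\},\subseteq)$ onto $(0\cdot M(R),\leq)$; in particular the whole ring $R$ corresponds to the bottom element $\abf=0+R$, while the zero ideal corresponds to the top element $0$.

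The key step is to identify the atoms. By definition an atom of $0\cdot M(R)$ is an element $0+I\neq\abf$ admitting no element strictly between $\abf$ and $0+I$. Under the order-reversing correspondence above, $0+I\neq\abf$ means $I\neq R$, and the absence of an element strictly between $\abf$ and $0+I$ means there is no ideal $J$ with $I\subsetneq J\subsetneq R$; that is, $I$ is a maximal ideal of $R$. Therefore the atoms of $0\cdot M(R)$ are in bijection with the maximal ideals of $R$. Combining this with Lemma~\ref{L:LocalAtom}, $M(R)$ is local if and only if $0\cdot M(R)$ has a unique atom, which holds if and only if $R$ has a unique maximal ideal, i.e.\ if and only if $R$ is local.

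The main obstacle is the bookkeeping around the order-reversal: one must be careful that $\abf$ is the bottom (not the top) of $0\cdot M(R)$ and that the atoms sit just above $\abf$, so that they correspond to \emph{maximal} rather than to minimal ideals. Once the explicit description of $0\cdot M(R)$ and of its order is in place, the equivalence with locality of $R$ is immediate.
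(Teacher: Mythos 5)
Your proof is correct and takes essentially the same route as the paper's: the paper likewise reduces the statement to Lemma~\ref{L:LocalAtom} by asserting that the atoms of $0\cdot M(R)$ are in bijection with the maximal ideals of $R$. The only difference is that you explicitly verify this bijection via the order-reversing correspondence $I\mapsto 0+I$ (and explicitly check atomicity and that $M(R)$ is a common meadow), details which the paper leaves implicit.
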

\begin{proof}
Given a unital commutative ring $R$, the atoms of the lattice $0\cdot M(R)$ are in bijection with the maximal ideals of $R$. By definition we have that $R$ is local if and only if it has a unique maximal ideal, which is equivalent to $0\cdot M(R)$ having a unique atom. By Lemma~\ref{L:LocalAtom} we then have that $R$ is local if and only if $M(R)$ is local, as we wanted.
\end{proof}

Note that if $R$ is a unital commutative local ring then $M(R)$ may fail to be a common meadow. However, it will still be a local pre-meadow with $\abf$.

\begin{exmp}
    Consider the pre-meadow with $\abf$ defined by the following directed lattice
    \[\begin{tikzcd}
	& {\z_2\times\z_2} \\
	{\z_2} && {\z_2} \\
	& {\z_2} \\
	& {\{\abf\}}
	\arrow["{\pi_1}", from=1-2, to=2-1]
	\arrow["{\pi_1}"', from=1-2, to=2-3]
	\arrow["\Id", from=2-1, to=3-2]
	\arrow["\Id"', from=2-3, to=3-2]
	\arrow[from=3-2, to=4-2]
\end{tikzcd}\]
    Since the set $I_{(1,0)}$ has two different maximal elements we have that this pre-meadow with $\abf$ is not a common meadow. Nevertheless it is a local pre-meadow since it has a unique atom.
\end{exmp}

Before we progress we recall the following property of ideals of unital commutative rings. 

\begin{proposition}[\cite{hungerford2012algebra}[Exercise III.3.22]\label{T:StructureIdeals}
    Let $R_1$ and $R_2$ be unital commutative rings. Then the ideals of $R_1\times R_2$ are of the form $I\times J$ with $I$ an ideal of $R_1$ and $J$ an ideal of $R_2$.
\end{proposition}

If $P$ and $Q$ are pre-meadows with $\abf$ we can define their Cartesian product  $P \times Q$ in a way that it is a pre-meadow with $\abf$. In \cite[Section 4]{Dias_Dinis(23)} it is claimed, without a proof, that the
Cartesian product of meadows is again a meadow. We show that the product $P \times Q$ of pre-meadows with $\abf$ is a common meadow if and only if $P$ and $Q$ are themselves common meadows. We start with a lemma which gives a relation between the structure of $P$ and $Q$ with the product $P \times Q$.

\begin{lemma}\label{L:PreMeadowProduct}
    Let $P$ and $Q$ be pre-meadows with $\abf$. Then $P\times Q$ is a pre-meadow with $\abf$ such that  $(P\times Q)_{(0\cdot z,0\cdot w)}=P_{0\cdot z}\times P_{0\cdot w}$. In particular we have
    $$P\times Q= \bigsqcup_{(0\cdot z,0\cdot w) \in (0\cdot P)\times (0\cdot Q)}P_{0\cdot z}\times Q_{0\cdot w}.$$ 
\end{lemma}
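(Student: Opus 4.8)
The plan is to show directly that $P \times Q$ satisfies the definition of a pre-meadow with $\abf$ and then to compute its fibers. First I would verify that the ten equational axioms $(P_1)$ through $(P_{10})$ hold in $P \times Q$ when the operations are defined componentwise, namely $(p_1,q_1) + (p_2,q_2) = (p_1 + p_2, q_1 + q_2)$, $(p_1,q_1)\cdot(p_2,q_2) = (p_1\cdot p_2, q_1\cdot q_2)$, and $-(p,q) = (-p,-q)$. Each axiom is inherited coordinatewise from the corresponding axiom in $P$ and in $Q$, so this step is routine: the zero is $(0_P, 0_Q)$ and the one is $(1_P, 1_Q)$. The key point is that componentwise operations preserve every universally quantified equation, so I would state this verification briefly rather than grinding through all ten cases.

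The substantive content is identifying the element $\abf$ of $P \times Q$ and computing the fibers $(P\times Q)_z$. Since $0\cdot(p,q) = (0\cdot p, 0\cdot q)$, the value of $0\cdot(\cdot)$ on $P\times Q$ lands in $(0\cdot P)\times(0\cdot Q)$, and for a target $(0\cdot z, 0\cdot w)$ the fiber is exactly $\{(p,q) : 0\cdot p = 0\cdot z,\ 0\cdot q = 0\cdot w\} = P_{0\cdot z}\times Q_{0\cdot w}$; this establishes the displayed decomposition. To confirm $P\times Q$ is genuinely a pre-meadow with $\abf$, I must exhibit the unique $z \in 0\cdot(P\times Q)$ with singleton fiber that is absorbent for addition. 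The natural candidate is $(\abf_P, \abf_Q)$: its fiber is $P_{\abf_P}\times Q_{\abf_Q}$, which is a singleton precisely because each factor is a singleton by the definition of $\abf$ in $P$ and in $Q$. I would then check that $(\abf_P,\abf_Q)$ is absorbent, which follows because $x + \abf_P = \abf_P$ for all $x\in P$ and likewise in $Q$.

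The main obstacle is the uniqueness clause in the definition of a pre-meadow with $\abf$: I must show that $(\abf_P,\abf_Q)$ is the \emph{only} value $z\in 0\cdot(P\times Q)$ whose fiber $(P\times Q)_z$ is a singleton. Here I would use the fiber computation: the fiber over $(0\cdot z,0\cdot w)$ is a singleton if and only if both $|P_{0\cdot z}| = 1$ and $|Q_{0\cdot w}| = 1$, and by the uniqueness of $\abf_P$ in $P$ and $\abf_Q$ in $Q$ this forces $0\cdot z = \abf_P$ and $0\cdot w = \abf_Q$. This is the step where the defining property of $\abf$ in each factor is genuinely used rather than merely the equational axioms, and it is what makes $\abf_{P\times Q} = (\abf_P,\abf_Q)$ well-defined. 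Once uniqueness and absorbency are established, the lemma follows, and the displayed union is just a restatement of the fiber computation applied across all pairs $(0\cdot z, 0\cdot w)\in (0\cdot P)\times(0\cdot Q)$.
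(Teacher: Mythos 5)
Your proposal is correct and follows essentially the same route as the paper: the core of both arguments is the computation $0\cdot(x,y)=(0\cdot x,0\cdot y)$, which identifies each fiber $(P\times Q)_{(0\cdot z,0\cdot w)}$ with $P_{0\cdot z}\times Q_{0\cdot w}$ and yields the disjoint-union decomposition. You are in fact more thorough than the paper, which confines its written proof to the fiber computation and leaves implicit both the componentwise verification of axioms $(P_1)$--$(P_{10})$ and the existence, uniqueness, and absorbency of $\abf_{P\times Q}=(\abf_P,\abf_Q)$ --- the uniqueness argument you give (a product of nonempty fibers is a singleton iff both factors are) is exactly the missing detail.
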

\begin{proof}
    We start by showing that $(P\times Q)_{(0\cdot z,0\cdot w)}=P_{0\cdot z}\times P_{0\cdot w}$. Let $(x,y)\in (P\times Q)_{(0\cdot z,0\cdot w)}$. Then
    $$
    (0\cdot z,0\cdot w)=(x,y)\cdot (0,0)=(0\cdot x,0\cdot y),
    $$
    and so $(x,y)\in P_{0\cdot z}\times P_{0\cdot w}$. Similarly, if we take a $(x,y)\in P_{0\cdot z}\times P_{0\cdot w}$, then 
    $$
    (x,y)\cdot (0,0)=(0\cdot x,0\cdot y) =(0\cdot z, 0\cdot w), 
    $$
    and so  $(P\times Q)_{(0\cdot z,0\cdot w)}=P_{0\cdot z}\times P_{0\cdot w}$. 

    The result then follows from the fact that
    \begin{equation*}
        \begin{split}
                P\times Q &= \bigsqcup_{(0\cdot z,0\cdot w) \in (0\cdot P)\times (0\cdot Q)} (P\times Q)_{(0\cdot z,0\cdot w)}\\
                &=  \bigsqcup_{(0\cdot z,0\cdot w) \in (0\cdot P)\times (0\cdot Q)} P_{0\cdot z}\times P_{0\cdot w}. \qedhere
        \end{split}
    \end{equation*}
\end{proof}

Using Lemma \ref{L:PreMeadowProduct} we can prove that any finite product of common meadows is still a common meadow.

\begin{theorem} \label{P:CommonMeadowProduct}
    Let $P$ and $Q$ be pre-meadows with $\abf$ then $P\times Q$ is a common meadow if and only if $P$ and $Q$ are common meadows.
\end{theorem}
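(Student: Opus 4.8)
The plan is to translate everything into the language of the associated directed lattices and to read off the result from the criterion of Theorem~\ref{T:DirectedLattice}: a pre-meadow with $\abf$ is a common meadow exactly when, for every element $x$ of every vertex ring, the set $I_x$ of vertices at which $x$ becomes invertible admits a greatest element. First I would use Lemma~\ref{L:PreMeadowProduct} to pin down the structure of $P\times Q$: its lattice is $(0\cdot P)\times(0\cdot Q)$ with the product order, the ring over $(0\cdot z,0\cdot w)$ is $P_{0\cdot z}\times Q_{0\cdot w}$, and, because addition in the product is componentwise, Proposition~\ref{P:Transitionmaps} shows that each transition map of $P\times Q$ is the componentwise product of a transition map of $P$ and one of $Q$.

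The crux is then a single identification. Writing, for $x\in P_{0\cdot z}$,
\[
I^P_x=\{0\cdot z'\le 0\cdot z \mid f_{0\cdot z',0\cdot z}(x)\in (P_{0\cdot z'})^\times\},
\]
and $I^Q_y$ analogously (with the transition maps $g$ of $Q$) for $y\in Q_{0\cdot w}$, I would show that in the product meadow
\[
I_{(x,y)}=I^P_x\times I^Q_y .
\]
This is immediate from the elementary fact that a pair $(a,b)$ is invertible in a product of unital commutative rings $R\times S$ if and only if $a$ is invertible in $R$ and $b$ is invertible in $S$ (a companion to Proposition~\ref{T:StructureIdeals}): applied to the componentwise transition map $(x,y)\mapsto (f_{0\cdot z',0\cdot z}(x),\,g_{0\cdot w',0\cdot w}(y))$, it says exactly that $(0\cdot z',0\cdot w')\in I_{(x,y)}$ iff $0\cdot z'\in I^P_x$ and $0\cdot w'\in I^Q_y$.

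Finally I would record the order-theoretic observation that, in a product poset, a set of the form $S\times T$ has a greatest element if and only if both $S$ and $T$ do: if $s^*=\max S$ and $t^*=\max T$ then $(s^*,t^*)=\max(S\times T)$, while conversely a greatest element of $S\times T$ is necessarily nonempty and projects to the respective maxima. Combining this with the identification above closes both directions simultaneously. If $P$ and $Q$ are common meadows then every $I^P_x$ and every $I^Q_y$ has a greatest element, hence so does every $I_{(x,y)}=I^P_x\times I^Q_y$, and $P\times Q$ is a common meadow by Theorem~\ref{T:DirectedLattice}. Conversely, if $P\times Q$ is a common meadow, then for a fixed $x\in P$ and any chosen $y\in Q$ the set $I^P_x\times I^Q_y$ has a greatest element, forcing $I^P_x$ to have one; as $x$ was arbitrary $P$ is a common meadow, and symmetrically so is $Q$. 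I expect the only delicate point --- the main obstacle --- to be making the identification $I_{(x,y)}=I^P_x\times I^Q_y$ rigorous, i.e.\ verifying that the lattice order on $(0\cdot P)\times(0\cdot Q)$ really is the order of the product meadow and that its transition maps decouple componentwise, so that invertibility in the product ring separates exactly into the two factors.
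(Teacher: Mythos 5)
Your proposal is correct and takes essentially the same approach as the paper's proof: both rest on Lemma~\ref{L:PreMeadowProduct}, the identification $I_{(x,y)}=I_x\times I_y$ coming from componentwise invertibility in product rings, and the criterion of Theorem~\ref{T:DirectedLattice}. The only difference is cosmetic: in the converse direction the paper fixes $y=\abf$ (embedding $P$ into $P\times Q$ via $x\mapsto(x,\abf)$, so that $I_{(x,\abf)}=I_x\times\{\abf\}$), whereas you keep $y$ arbitrary and recover the greatest element of $I_x$ by projecting the greatest element of $I_x\times I_y$.
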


\begin{proof}
    Suppose that $P$ and $Q$ are common meadows.    By Theorem \ref{T:DirectedLattice} we have that $P\times Q$ is a common meadow if for all $(x,y)\in P\times Q$ the  set
    $$I_{(x,y)}=\{(0\cdot z,0\cdot w)\in 0\cdot P\times 0\cdot Q\mid (x+0\cdot z,y+0\cdot w)\in (P_{0\cdot z}\times Q_{0\cdot w})^\times\}$$

    has a greatest element. In fact, by Lemma \ref{L:PreMeadowProduct} we have that $(P\times Q)_{(0\cdot z,0\cdot w)}=P_{0\cdot z}\times P_{0\cdot w}$, and so an element $(x+0\cdot z,y+0\cdot w)$ is invertible in $P_{0\cdot z}\times P_{0\cdot w}$ if and only if $x+0\cdot z$ is invertible  in $P_{0\cdot z}$ and $y+0\cdot w$ is invertible  in $Q_{0\cdot w}$, that is we have that $I_{(x,y)}=I_x\times I_y$. 
    
    Since $P$ and $Q$ are common meadows, $I_x$ and $I_y$ both have a greatest element 
 and so $I_{(x,y)}$ must also have a greatest element. We conclude that $P\times Q$ is a common meadow.

    Suppose now that $P\times Q$ is a common meadow,  and let us see that $P$ is a common meadow. Consider the function $\psi:P\to P\times Q$ defined by $\psi(x)=(x,\abf)$. It is straightforward to check that $\psi$ is an homomorphism of semigroups for both operations. In particular, the restriction $\tilde{\psi}:P_{0\cdot z}\to P_{0\cdot z}\times\{\abf\}$ is a ring homomorphism that commutes with the transition maps. Then, given $x\in P$ and $0\cdot z\in 0\cdot P$ (such that $0\cdot z\cdot x=0\cdot z$) we have that $x+0\cdot z\in P_{0\cdot z}^\times$ if and only if $P_{0\cdot z}^\times\times \{\abf
    \}$. Then $I_{(x,\abf)}=I_x\times\{\abf\}$ and since $P\times Q$ is a common meadow we have that $I_{(x,\abf)}=I_x\times\{\abf\}$ has a greatest element and so $I_x$ also has a greatest element and consequently $P$ is a common meadow. The argument for $Q$ is similar. 
\end{proof}

Next we will see that a decomposition of a unital commutative ring $R$ gives rise to a decomposition of $M(R)$. In particular, common meadows of the form $M(R)$ with $R$ an unital commutative Artinian ring decompose into a product of local common meadows.

\begin{theorem}\label{L:MeadowDecomposition}
    Let $R=\bigoplus_{i=1}^{n}R_i$ be a decomposition of a unital commutative ring. Then $$M(R)\simeq \prod_{i=1}^nM(R_i).$$
\end{theorem}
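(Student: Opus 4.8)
The plan is to construct an explicit isomorphism of common meadows between $M(R)$ and $\prod_{i=1}^n M(R_i)$ by exploiting the structure of ideals in a direct product. The key algebraic fact is Proposition~\ref{T:StructureIdeals} (generalized by induction to $n$ factors): every ideal $I$ of $R=\bigoplus_{i=1}^n R_i$ factors uniquely as $I=\bigoplus_{i=1}^n I_i$, where each $I_i$ is an ideal of $R_i$. This gives a canonical bijection between the ideals of $R$ and the $n$-tuples of ideals $(I_1,\dots,I_n)$, and hence, via the Chinese-Remainder-type isomorphism $R/I\simeq \bigoplus_{i=1}^n R_i/I_i$, a bijection on the underlying sets $\bigsqcup_{I\trianglelefteq R} R/I$ and $\bigsqcup_{(I_1,\dots,I_n)} \bigoplus_i R_i/I_i$.

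First I would define the candidate map $\Phi:M(R)\to \prod_{i=1}^n M(R_i)$ on an element $x+I\in R/I$ by sending it to the tuple $\big(\pi_1(x)+I_1,\dots,\pi_n(x)+I_n\big)$, where $\pi_i:R\to R_i$ is the projection and $I=\bigoplus_i I_i$ is the factorization of $I$. By Lemma~\ref{L:PreMeadowProduct} (applied inductively, so that $\prod_i M(R_i)$ really is the disjoint union over tuples of the products $\prod_i R_i/I_i$), the target set is exactly $\bigsqcup_{(I_1,\dots,I_n)}\prod_i R_i/I_i$, so $\Phi$ is a well-defined bijection on underlying sets by the ideal-factorization fact above. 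Second I would check that $\Phi$ is a homomorphism of common meadows in the sense of Definition~\ref{D:Morphism}: it preserves $1$ (since $1_R=(1,\dots,1)$ maps to the tuple of identities), and it respects addition and multiplication. Here the crucial point is that the meadow operations in $M(R)$ (Theorem~\ref{P:FunctorObject}) combine two cosets $x+I$, $y+J$ by passing to $R/(I+J)$, while in the product the operations are taken componentwise in each $M(R_i)$, passing to $R_i/(I_i+J_i)$; these agree precisely because $(I+J)$ factors as $\bigoplus_i (I_i+J_i)$, which follows from the same factorization of ideals. Since $\Phi$ is a bijective homomorphism, its inverse (built from the inverse bijection on tuples of ideals) is also a homomorphism, so $\Phi$ is an isomorphism.

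I would then note that both sides are genuinely common meadows rather than mere pre-meadows with $\abf$: this is automatic if $R$ is Artinian (each $R_i$ is Artinian, so Theorem~\ref{P:FunctorObject} applies to each factor, and Theorem~\ref{P:CommonMeadowProduct} upgrades the finite product to a common meadow), but in full generality the statement is phrased for an arbitrary unital commutative ring, so I would treat $\Phi$ as an isomorphism of pre-meadows with $\abf$ and invoke Theorem~\ref{P:CommonMeadowProduct} only when the common-meadow structure is needed. In either case the isomorphism type is preserved, giving $M(R)\simeq\prod_{i=1}^n M(R_i)$.

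The main obstacle I anticipate is the careful bookkeeping in verifying that $\Phi$ respects the operations: one must track how the index-ideal of a sum or product behaves under the factorization, confirming that forming $I+J$ and then splitting into components yields the same tuple as splitting first and forming $I_i+J_i$ componentwise. This is where Proposition~\ref{T:StructureIdeals} is doing the real work, and the only subtlety is that it must be lifted from two factors to $n$ factors by induction, together with the compatible identification $R/I\simeq\bigoplus_i R_i/I_i$; once this compatibility is established, that the map preserves $+$, $\cdot$, and $1$ is a direct computation. Everything else is routine, so I would spend the bulk of the written proof making the ideal-factorization compatibility explicit and then stating that the operation-preservation follows.
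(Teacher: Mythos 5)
Your proposal is correct and follows essentially the same route as the paper: both rest on Proposition~\ref{T:StructureIdeals} to factor ideals of the product ring, the identification $(R_1\times R_2)/(I\times J)\simeq R_1/I\times R_2/J$, and Lemma~\ref{L:PreMeadowProduct} to recognize $\prod_i M(R_i)$ as the disjoint union $\bigsqcup_{(I_1,\dots,I_n)}\prod_i R_i/I_i$, with the general case obtained by induction from $n=2$. Your explicit verification that the operations are preserved (via the componentwise factorization of $I+J$) is a detail the paper leaves implicit, but it is the same argument.
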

\begin{proof}
We start by showing that 
\begin{equation}\label{E:isomorfic}
M(R)\simeq \bigsqcup_{I_i\trianglelefteq R_i}\prod_{i=1}^n R_i/I_i .
\end{equation}

We consider only the case $R=R_1\times R_2$, since the general case then follows by an easy induction on $n$. By Proposition \ref{T:StructureIdeals}, the ideals of $R$ are of the form $I\times J$, where $I\trianglelefteq R_1$ and $J\trianglelefteq R_2$. 

We have that
\begin{equation*}
\begin{split}
M(R)&=\bigsqcup_{K\trianglelefteq R_1\times R_2} (R_1\times R_2)/K\\
&=\bigsqcup_{I\trianglelefteq R_1,J\trianglelefteq R_2} (R_1\times R_2)/(I\times J)\\
&\simeq \bigsqcup_{I\trianglelefteq R_1,J\trianglelefteq R_2} (R_1/I\times R_2/J).    
\end{split}
\end{equation*}

Where the isomorphism is given by  $(x,y)+I\times J\mapsto (x+I,y+J)$. Hence $M(R)\simeq \bigsqcup_{I_i\trianglelefteq R_i}\prod_{i=1}^n R_i/I_i $.

Now, by Lemma \ref{L:PreMeadowProduct} and equation \eqref{E:isomorfic} we have that
$$\prod_{i=1}^nM(R_i) = \bigsqcup_{I_i\trianglelefteq R_i}\prod_{i=1}^n R_i/I_i\simeq M(R),$$
which entails the result.
\end{proof}

By Theorem \ref{P:CommonMeadowProduct} we have that $M(R)$ is a common meadow if and only if $M(R_i)$ is a common meadow for all $i\in\{1,\cdots,n\}$.

\begin{definition}
    Let $M$ be a common meadow. If $M\simeq \prod_{i=1}^n M_i$ with $M_i$ local meadows, we say that $M$ \emph{decomposes} into (the product of) local meadows. 
\end{definition}

    We are now able to prove the first part of the main result.

\begin{corollary}\label{Cor:Decomposition}
    If $R$ is a commutative unital Artinian ring, then $M(R)$ decomposes into a finite product of local common meadows. 
\end{corollary}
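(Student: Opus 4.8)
The plan is to assemble three already-established ingredients: the classical structure theorem for Artinian rings, the meadow decomposition of Theorem~\ref{L:MeadowDecomposition}, and the characterization of locality in Lemma~\ref{L:LocalMeadow}. Since this is a corollary, no new technical machinery should be needed; the work lies in checking that the factors produced by the ring-theoretic decomposition satisfy the hypotheses required to conclude that the corresponding meadow factors are \emph{local common meadows}.

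First I would invoke the structure theorem for commutative unital Artinian rings (the cited \cite[Corollary 2.16]{eisenbud2013commutative}): since $R$ is Artinian, it decomposes as a finite direct sum
\[
R=\bigoplus_{i=1}^{n}R_i,
\]
where each $R_i$ is a local Artinian ring. The finiteness of $n$ here is exactly what makes the subsequent step applicable. Next I would feed this decomposition into Theorem~\ref{L:MeadowDecomposition}, which yields an isomorphism
\[
M(R)\simeq\prod_{i=1}^{n}M(R_i).
\]

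It then remains to verify that each factor $M(R_i)$ is a local common meadow. Since each $R_i$ is Artinian, Theorem~\ref{P:FunctorObject} guarantees that $M(R_i)$ is a common meadow (equivalently, one could observe that $M(R)$ is a common meadow by Theorem~\ref{P:FunctorObject} and then apply Theorem~\ref{P:CommonMeadowProduct} inductively to the finite product to pull this property back to each factor). Because each $R_i$ is moreover local, Lemma~\ref{L:LocalMeadow} upgrades this to: $M(R_i)$ is a \emph{local} common meadow. Combining these facts with the displayed isomorphism shows that $M(R)$ decomposes as a finite product of local common meadows, which is precisely the claim.

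The only real obstacle is bookkeeping rather than substance: one must be careful that the summands $R_i$ in the classical decomposition are genuinely \emph{Artinian local} rings, so that both Theorem~\ref{P:FunctorObject} (to get a common meadow) and Lemma~\ref{L:LocalMeadow} (to get locality) are legitimately available, and that Theorem~\ref{L:MeadowDecomposition}, stated for finite decompositions, applies thanks to the finiteness guaranteed by Artinianity. Note that this corollary establishes only the \emph{existence} of such a decomposition; the uniqueness up to permutation asserted in Theorem~\ref{T:Main} is a separate matter and would be handled afterward.
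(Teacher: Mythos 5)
Your proof is correct and follows essentially the same route as the paper's: decompose $R$ into local Artinian rings via the classical structure theorem, apply Theorem~\ref{L:MeadowDecomposition} to obtain $M(R)\simeq\prod_{i=1}^n M(R_i)$, and conclude locality of each factor from Lemma~\ref{L:LocalMeadow}. Your extra remarks (invoking Theorem~\ref{P:FunctorObject} to confirm each $M(R_i)$ is a common meadow, and noting that uniqueness is deferred to Theorem~\ref{T:Main}) are sound bookkeeping that the paper leaves implicit.
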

\begin{proof}
Let $R$ be a unital commutative Artinian ring. Then, there exist local rings $R_i$ such that $R\simeq \bigoplus_{i=1}^{n}R_i$. By Lemma \ref{L:MeadowDecomposition} we have that $M(R)\simeq \prod_{i=1}^nM(R_i)$. Hence, each $M(R_i)$ is a local meadow, by Lemma  \ref{L:LocalMeadow}.
\end{proof}

Corollary~\ref{Cor:Decomposition} shows that in case $R$ is a unital commutative Artinian ring, the common meadow $M(R)$ admits a decomposition into local meadows. We now turn to show that such decomposition is essentially unique. 

The following result provides a useful characterization of atomic meadows in terms of its components.

\begin{lemma}\label{L:ProductofAtomicIsAtomic}
    Let $M$ be a common meadow, with $M\simeq \prod_{i=1}^t M_i$. Then $M$ is atomic if and only if $M_i$ is atomic for all $i \in \{1, \dots, t\}$.
\end{lemma}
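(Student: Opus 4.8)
The plan is to reduce the statement to a purely lattice-theoretic fact about product lattices, using Lemma~\ref{L:PreMeadowProduct} to identify $0\cdot M$ with the product of the lattices $0\cdot M_i$. First I would establish that, under the isomorphism $M\simeq\prod_{i=1}^t M_i$, the lattice $0\cdot M$ is isomorphic to $\prod_{i=1}^t (0\cdot M_i)$ equipped with the componentwise order, and that $\abf_M$ corresponds to the bottom tuple $(\abf_1,\ldots,\abf_t)$. This is Lemma~\ref{L:PreMeadowProduct} iterated over the $t$ factors: since multiplication in the product is computed coordinatewise, we have $(0\cdot z_i)_i\leq (0\cdot z_i')_i$ exactly when $0\cdot z_i\leq 0\cdot z_i'$ in every coordinate, and the tuple $(\abf_1,\ldots,\abf_t)$ is absorbent for addition, hence equals $\abf_M$. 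Throughout I use the convention, implicit in the paper, that $\abf$ is the bottom of the lattice and that an \emph{atom} is a minimal element strictly above $\abf$.

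The heart of the argument is the following claim, which I would isolate and prove first: an element of $\prod_{i=1}^t (0\cdot M_i)$ is an atom if and only if it has the form $(\abf_1,\ldots,\abf_{j-1},a_j,\abf_{j+1},\ldots,\abf_t)$ for exactly one index $j$, where $a_j$ is an atom of $0\cdot M_j$. The ``if'' direction is immediate, since any element below such a tuple is again concentrated in coordinate $j$ with $j$-th entry $\leq a_j$, hence equal to $\abf_j$ or $a_j$ by minimality of $a_j$. For the ``only if'' direction, given an atom $a=(a_1,\ldots,a_t)$, I pick a coordinate $j$ with $a_j\neq\abf_j$; the tuple carrying $a_j$ in position $j$ and $\abf$ elsewhere is a nonzero element below $a$, so atomicity of $a$ forces it to equal $a$, whence $a_i=\abf_i$ for $i\neq j$ and $a_j$ is an atom of $0\cdot M_j$.

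With this claim both implications become short. For the ``if'' direction (each factor atomic implies $M$ atomic), given a nonzero tuple $x=(x_1,\ldots,x_t)$ I choose $j$ with $x_j\neq\abf_j$, select an atom $a_j\leq x_j$ in $0\cdot M_j$, and observe that the single-coordinate tuple with $a_j$ in position $j$ is an atom of $0\cdot M$ lying below $x$. For the ``only if'' direction, I fix $i$ and a nonzero $x_i\in 0\cdot M_i$, embed it as the single-coordinate tuple $\hat{x}$, and apply atomicity of $M$ to obtain an atom $a\leq\hat{x}$. By the claim $a$ is concentrated in one coordinate, and since $a\leq\hat{x}$ forces every coordinate other than $i$ to be $\abf$, that coordinate must be $i$, yielding an atom $a_i\leq x_i$ in $0\cdot M_i$ and establishing that $M_i$ is atomic.

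I expect the main obstacle to be bookkeeping rather than genuine difficulty: the delicate point is the ``only if'' direction of the atom-characterization claim, where one must rule out an atom of the product having two nonzero coordinates, together with keeping track of the lattice convention that $\abf$ is the bottom so that ``atom'' consistently means a minimal element strictly above $\abf$. Once $0\cdot M$ is identified with the product lattice via Lemma~\ref{L:PreMeadowProduct}, no algebraic input beyond the coordinatewise description of the operations is needed.
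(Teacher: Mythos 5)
Your proof is correct, and it is in fact more careful than the paper's own argument. Both proofs start the same way, using Lemma~\ref{L:PreMeadowProduct} to identify $0\cdot M$ with the componentwise-ordered product $\prod_{i=1}^t(0\cdot M_i)$ whose bottom is $(\abf_1,\dots,\abf_t)$, and your ``only if'' direction (atomicity of $M$ passes to each factor) coincides with the paper's argument for that direction. The difference is in the converse. For $t=2$ the paper picks atoms $x_1\leq x$ and $y_1\leq y$ and asserts that $(x_1,y_1)$ is an atom of the product; under the componentwise order this is false whenever both coordinates are above the bottom, since $(\abf_1,\abf_2)<(x_1,\abf_2)<(x_1,y_1)$. (That argument also tacitly assumes an atom exists below $y$ even when $y=\abf_2$, where none can exist.) Your key claim --- that the atoms of the product are exactly the tuples concentrated in a single coordinate, carrying an atom of that factor there and $\abf$ elsewhere --- is the correct characterization; the conclusion of the lemma still holds because, given a nonzero tuple, one only needs an atom below some nonzero coordinate, exactly as you argue. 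Notably, your characterization is the one the paper itself invokes later, in the lemma counting the number of factors in a decomposition into local meadows, so your route both proves the statement and repairs the flaw in the published proof, at the modest cost of proving the atom characterization in both directions.
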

\begin{proof}
    Let us consider the case $t=2$, i.e.\ $M\simeq M_1\times M_2$, and then $0\cdot M\simeq 0\cdot M_1\times 0\cdot M_2$. The general case follows by an easy induction on $t$.

    Suppose first that $M$ is atomic. Then, for each $(x,\abf)\in 0\cdot M_1$, there is an atom $(a,b)\in 0\cdot M_1\times 0\cdot M_2$ such that $(a,b)\leq (x,\abf)$. This means that $b=\abf$, since $\abf$ is the minimum, and $a\in 0\cdot M_1$ is an atom such that $a\leq x$, which means that $0\cdot M_1$ is atomic. With a similar argument one sees that $0\cdot M_2$ is atomic.

    Suppose now that $0\cdot M_1$ and $0\cdot M_2$ are atomic and take a $(x,y)\in  0\cdot M_1\times 0\cdot M_2$. Then there exist atoms $x_1\in 0\cdot M_1$ and $y_1\in 0\cdot M_1$ such that $x_1\leq x$ and $y_1\leq y$, which means that $(x_1,y_1)\leq (x,y)$. Since $x_1$ and $y_1$ are atoms we have that $(x_1,y_1)$ is also an atom and so $0\cdot M$ is atomic.    
\end{proof}

\begin{lemma}
    Every decomposition of an atomic common meadow into local meadows has the same number of factors.
\end{lemma}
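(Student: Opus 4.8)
The plan is to show that the number of factors in any decomposition of an atomic common meadow $M$ into local meadows is governed by an intrinsic invariant of $M$, namely the number of atoms of the lattice $0\cdot M$. First I would fix a decomposition $M\simeq \prod_{i=1}^n M_i$ into local meadows. By Lemma~\ref{L:ProductofAtomicIsAtomic}, since $M$ is atomic each factor $M_i$ is atomic as well; and being both local and atomic, Lemma~\ref{L:LocalAtom} guarantees that each $0\cdot M_i$ has exactly one atom.

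The key step is to compute the atoms of the product lattice $0\cdot M\simeq \prod_{i=1}^n (0\cdot M_i)$, where the identification of $0\cdot M$ with the coordinatewise product follows from Lemma~\ref{L:PreMeadowProduct}. The minimum of the product is the tuple $(\abf,\dots,\abf)$, and I claim that $(a_1,\dots,a_n)$ is an atom if and only if exactly one coordinate $a_i$ is an atom of $0\cdot M_i$ while every other coordinate equals $\abf$. Indeed, if two coordinates were strictly above $\abf$, then replacing one of them by $\abf$ would yield a strictly smaller element still above the minimum, contradicting minimality; conversely a tuple supported on a single atomic coordinate plainly covers the minimum. Hence the number of atoms of $0\cdot M$ equals $\sum_{i=1}^n(\text{number of atoms of }0\cdot M_i)=n$, since each $0\cdot M_i$ contributes exactly one atom.

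Finally, because the number of atoms of $0\cdot M$ is determined by the isomorphism class of $M$ and makes no reference to the chosen decomposition, any two decompositions $M\simeq\prod_{i=1}^n M_i$ and $M\simeq\prod_{j=1}^m N_j$ into local meadows must satisfy $n=(\text{number of atoms of }0\cdot M)=m$, which is the desired conclusion. The main obstacle I anticipate is precisely the atom-counting step for product lattices: one must argue carefully that the atoms of $\prod_{i=1}^n(0\cdot M_i)$ are exactly the tuples supported on a single atomic coordinate. This hinges on the minimum being attained coordinatewise and on $\abf$ being the least element of each $0\cdot M_i$, so the bulk of the write-up will be in pinning down this elementary but essential lattice-theoretic fact, after which the invariance argument is immediate.
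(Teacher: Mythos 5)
Your proof is correct and takes essentially the same route as the paper's: both arguments use Lemmas \ref{L:ProductofAtomicIsAtomic} and \ref{L:LocalAtom} to give each local factor exactly one atom, identify the atoms of $0\cdot M$ as the tuples with a single atom coordinate and $\abf$ in every other coordinate, and conclude that the number of factors equals the number of atoms of $0\cdot M$, an isomorphism invariant. The only difference is that you spell out the atom-counting step for the product lattice, which the paper dismisses with ``one can easily see.''
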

\begin{proof}
Let $M$ be a common meadow with atomic lattice such that $M=\prod_{i=1}^t M_i$, where $M_i$ is a local meadow, for all $i\in \{1,\cdots,t\}$. In particular each $M_i$ is atomic by Lemma \ref{L:ProductofAtomicIsAtomic}, and has a unique atom, by Lemma \ref{L:LocalAtom}.

Let $a_i$ be the unique atom of $0\cdot M_i$ and $\abf_j$ the inverses of zero in each $M_j$. One can easily see that the atoms of $0\cdot M$ are of the form $\abf_1\times \cdots \times \abf_{i-1}\times a_i \times \abf_{i+1}\cdots \times \abf_t$, that is, the elements of $0\cdot M$ that are products of the $\abf_j$'s of the rings $M_j$, with the exception of exactly one occurrence of $a_i$. Then the number of atoms of $M$ is exactly $t$, and therefore any other decomposition into local meadows must also have $t$ factors.
\end{proof}
\begin{proposition}\label{P:UniqueDecomposition}
Let $R$ be a unital commutative Artinian ring that decomposes into local rings $R=\bigoplus_{i=1}^{t}R_i$. If $M(R)=\prod_{i=1}^t M_i$, then there is a permutation function $\pi\in S_t$ such that $M_i=M(R_{\pi(i)})$.
\end{proposition}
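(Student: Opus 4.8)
The plan is to combine three ingredients already at hand: Proposition~\ref{P:MeadowsM0}, which says that $M(A)$ is determined up to isomorphism by its top ring $A=M(A)_0$; the uniqueness (up to permutation) of the decomposition of a commutative Artinian ring into local rings; and Lemma~\ref{L:PreMeadowProduct}, which describes the graded components of a product. Throughout, the $M_i$ are taken to be local meadows, as in the definition of a decomposition into local meadows, and via a fixed isomorphism $\theta\colon M(R)\to\prod_{k=1}^t M_k$ I identify $M(R)$ with $\prod_k M_k$.

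First I would pass to the top rings. The isomorphism $\theta$ restricts to a ring isomorphism between the subrings of elements $x$ with $0\cdot x=0$. By Lemma~\ref{L:PreMeadowProduct} the top ring of $\prod_i M_i$ is $\prod_i A_i$, where $A_i:=(M_i)_0$, so $R\simeq\bigoplus_{i=1}^t A_i$ as rings. Each $A_i$ is a direct factor of the Artinian ring $R$, hence is itself Artinian, and this decomposition yields orthogonal idempotents; setting $J_i:=\bigoplus_{k\neq i}A_k$ we have $R/J_i\simeq A_i$ and, by Proposition~\ref{T:StructureIdeals}, the ideals of $R$ containing $J_i$ are exactly (the images of) the ideals of $A_i$.

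The heart of the argument, and the step I expect to be the main obstacle, is to show that each factor is recoverable from its top ring, i.e.\ $M_i\simeq M(A_i)$. This is \emph{not} automatic for an arbitrary local common meadow: the examples exhibited above, with isomorphic top rings but non-isomorphic lattice structures, show that the top ring does not in general determine the meadow. The extra input is that $M_i$ sits inside $M(R)$, whose lattice is the \emph{full} ideal lattice of $R$. Concretely, I would examine the embedded copy $\widehat{M_i}=\{(\abf,\dots,y,\dots,\abf):y\in M_i\}$ of $M_i$ inside $\prod_k M_k$. Writing $\delta=(\abf,\dots,z,\dots,\abf)$ for $z\in 0\cdot M_i$, Lemma~\ref{L:PreMeadowProduct} together with the fact that each $\abf$-component $(M_k)_{\abf}$ is a single point gives $(M(R))_\delta=\{\abf\}\times\cdots\times(M_i)_z\times\cdots\times\{\abf\}$; hence $\widehat{M_i}=\bigsqcup_{\delta\leq\delta_i}(M(R))_\delta$, where $\delta_i=(\abf,\dots,0_{A_i},\dots,\abf)$ corresponds to $J_i$. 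Since the order on $0\cdot M(R)$ is reverse inclusion of ideals, $\delta\leq\delta_i$ amounts to the associated ideal $J$ satisfying $J\supseteq J_i$, so that $\widehat{M_i}=\bigsqcup_{J\supseteq J_i}R/J$. Under the identification of ideals of $R$ containing $J_i$ with ideals of $R/J_i\simeq A_i$, and because the meadow operations and transition maps of $M(R)$ restrict to the natural operations and projections on these quotients (Proposition~\ref{P:Transitionmaps}), this union is precisely $M(R/J_i)$. Therefore $M_i\simeq\widehat{M_i}\simeq M(R/J_i)\simeq M(A_i)$.

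Once this identification is established the rest is immediate. As $M_i$ is local and $M_i\simeq M(A_i)$ with $A_i$ Artinian, Lemma~\ref{L:LocalMeadow} forces $A_i$ to be a local ring. Thus $R\simeq\bigoplus_{i=1}^t A_i$ and $R\simeq\bigoplus_{j=1}^t R_j$ are two decompositions of $R$ into local rings, so by uniqueness there is a permutation $\pi\in S_t$ with $A_i\simeq R_{\pi(i)}$. Finally Proposition~\ref{P:MeadowsM0} promotes this ring isomorphism to $M(A_i)\simeq M(R_{\pi(i)})$, whence $M_i\simeq M(R_{\pi(i)})$, as desired.
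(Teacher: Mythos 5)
Your proposal is correct, but it takes a genuinely different route through the key step. The paper reduces to $t=2$, writes $R=S\times T$ and $M(R)\simeq M\times N$, deduces that the top rings $M_0$ and $N_0$ are local \emph{directly from the uniqueness of the Artinian decomposition of $R$} (never using that $M$ and $N$ are local meadows), matches $M_0\simeq S$, $N_0\simeq T$ up to a swap, and then identifies $M\simeq M(S)$ and $N\simeq M(T)$ by chasing a commutative diagram built from the isomorphism $M(S)\times M(T)\simeq M(R)\simeq M\times N$ of Theorem~\ref{L:MeadowDecomposition} and the transition maps of Proposition~\ref{P:Transitionmaps}. You instead prove the factor-recovery statement $M_i\simeq M(A_i)$ intrinsically, by identifying the corner $\widehat{M_i}$ with the down-set of $\delta_i$ inside the lattice of $M(R)$, hence with $\bigsqcup_{J\supseteq J_i}R/J\simeq M(R/J_i)$, and only afterwards invoke locality of the meadow factors (via Lemma~\ref{L:LocalMeadow}) to force locality of the rings $A_i$, finishing with ring uniqueness and Proposition~\ref{P:MeadowsM0}. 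What this buys: the hypothesis that the $M_i$ are local is used exactly once and exactly where it is needed, the induction on $t$ and the WLOG matching disappear, and the recovery of each factor from its top ring is more self-contained than the paper's diagram chase (which is the paper's sketchiest point).

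One assertion in your argument deserves justification: that $\delta_i=(\abf,\dots,0_{A_i},\dots,\abf)$ corresponds, under the identification $\theta$, to the ideal $J_i=\bigoplus_{k\neq i}A_k$. This is true but not automatic. It can be proved by observing that $0+J_i=0\cdot e^{-1}$ in $M(R)$, where $e$ is the idempotent with $\theta_0(e)=(0,\dots,1,\dots,0)$, that an isomorphism of common meadows preserves inverses (the inverse function is determined by the underlying pre-meadow structure), and that inverses in $\prod_k M_k$ are computed componentwise, so that $0\cdot\theta(e)^{-1}=\delta_i$. Alternatively, you can bypass the identification altogether: let $J^{*}$ be the ideal of $R$ with $\theta(0+J^{*})=\delta_i$; your down-set argument gives $\widehat{M_i}\simeq M(R/J^{*})$ verbatim with $J^{*}$ in place of $J_i$, and $R/J^{*}=(M(R))_{0+J^{*}}\simeq \bigl(\prod_k M_k\bigr)_{\delta_i}\simeq A_i$ automatically, since $\theta$ restricts to a ring isomorphism on each component; Proposition~\ref{P:MeadowsM0} then yields $M(R/J^{*})\simeq M(A_i)$. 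With either repair the proof closes.
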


\begin{proof}
We again consider the case where $t=2$ since the general case follows by an easy induction on $t$. So, we may consider $R=S\times T$ and $M(R)\simeq M\times N$. The general case follows by an easy induction on $t$. We have that $M_0\times N_0\simeq S\times T$ is a decomposition into local rings, because if $M_0$ is not a local ring, then it would decompose into the product of local rings (since $M_0\simeq M_0\times N_0/0\times N_0$ is Artinian) and the decomposition into local rings would not be unique. Without loss of generality we can assume that $M_0\simeq S$ and $N_0\simeq T$. 

By hypothesis, $M\times N\simeq M(R)\simeq  M(S)\times M(T)$  so there is a bijection 
$$\psi:\{I\times J\mid I\trianglelefteq S, J\trianglelefteq T\}\to 0\cdot M\times 0\cdot N.$$ Hence the following diagram commutes
\[\begin{tikzcd}
	{S\times T} & {M_0\times N_0} \\
	{S/I\times T/J} & {M_{\psi(I\times J)_1}\times N_{\psi(I\times J)_2}}
	\arrow["\Phi_0", from=1-1, to=1-2]
	\arrow["\pi_I\times \pi_J",from=1-1, to=2-1]
	\arrow["f_{\psi(I\times J)_1}\times f_{\psi(I\times J)_2}",from=1-2, to=2-2]
	\arrow["\Phi_{I\times J}", from=2-1, to=2-2]
\end{tikzcd}\]
Here the horizontal arrows represent the isomorphism $\Phi:M(S)\times M(T)\to M\times N$. Note that $\Phi_0(S)=M_0$ and $\Phi_0(T)=N_0$.  Since the diagram is commutative we have that $\Phi_{I\times J}(S/I\times 0)=M_{\psi(I\times J)_1}$ and $\Phi_{I\times J}(0\times T/J)=N_{\psi(I\times J)_2}$ and hence 
 $M(S)\simeq M$ and $M(T)\simeq N$.
\end{proof}

The proof of Theorem \ref{T:Main} is now straightforward.
\begin{proof}[Proof of Theorem~\ref{T:Main}]
   Let $R$ be a unital commutative Artinian ring. By Corollary \ref{Cor:Decomposition} we have that $M(R)\simeq\prod_{i=1}^tM(R_i)$ and, by Proposition \ref{P:UniqueDecomposition}, this decomposition is unique up to permutation.
\end{proof}

    We would like to point out that not all common meadows decompose into a product of local meadows, as illustrated by the following example.

    \begin{exmp}
     Let $M$ be the common meadow defined by the following directed lattice
            \[\begin{tikzcd}
	& {\z_2} \\
	{\z_2} && {\z_2} \\
	& {\{\abf\}}
	\arrow["\Id", from=1-2, to=2-1]
	\arrow["\Id"', from=1-2, to=2-3]
	\arrow[from=2-1, to=3-2]
	\arrow[from=2-3, to=3-2]
\end{tikzcd}\]
            If $M$ could be decomposed into $M\simeq N\times N'$, then $\z_2=N_0\times N_0'$ and so $N_0$ or $N'_0$ would need to be $0$ since $\z_2$ is a local ring.
             However, even if $M_0$ is not a local ring, it can happen that $M$ is indecomposable. Since a decomposition of a meadow also gives a lattice decomposition we have that $|0\cdot M|$ is prime then $M$ is indecomposable.
    \end{exmp}

    In conclusion, a decomposition of a meadow $M$ also gives a decomposition of the lattice $0\cdot M$ and a decomposition of the rings $M_{0\cdot x}$, for all $0\cdot x\in 0\cdot M$.

    We conclude the paper with some open questions:
Our first question is a sort of converse of Theorem~\ref{P:FunctorObject}.
\begin{question}
    Are there other classes of unital commutative rings for which $M(R)$ is a common meadow?  
\end{question}

\begin{question}
       Is it possible to characterize which common meadows decompose into the product of local meadows?
\end{question}

\bibliographystyle{siam}
\bibliography{References}

\begin{thebibliography}{10}

\bibitem{bergstra2020arithmetical}
{\sc J.~Bergstra}, {\em Arithmetical datatypes, fracterms, and the fraction
  definition problem}, Transmathematica,  (2020).

\bibitem{Bergstra2008}
{\sc J.~Bergstra, Y.~Hirshfeld, and J.~Tucker}, {\em Fields, Meadows and
  Abstract Data Types}, Springer Berlin Heidelberg, Berlin, Heidelberg, 2008,
  pp.~166--178.

\bibitem{Bergstra2015}
{\sc J.~Bergstra and A.~Ponse}, {\em Division by Zero in Common Meadows},
  Springer International Publishing, Cham, 2015, pp.~46--61.

\bibitem{BP(20)}
\leavevmode\vrule height 2pt depth -1.6pt width 23pt, {\em Arithmetical
  datatypes with true fractions}, Acta Inform., 57 (2020), pp.~385--402.

\bibitem{10.1145/1219092.1219095}
{\sc J.~Bergstra and J.~Tucker}, {\em The rational numbers as an abstract data
  type}, J. ACM, 54 (2007), p.~7–es.

\bibitem{bergstra2023axioms}
\leavevmode\vrule height 2pt depth -1.6pt width 23pt, {\em On the axioms of
  common meadows: Fracterm calculus, flattening and incompleteness}, The
  Computer Journal, 66 (2023), pp.~1565--1572.

\bibitem{Bergstra_Tucker_2024}
{\sc J.~A. Bergstra and J.~V. Tucker}, {\em Fracterm calculus for signed common
  meadows}, Transmathematica,  (2024).

\bibitem{bergstra2024ringscommondivisioncommon}
{\sc J.~A. Bergstra and J.~V. Tucker}, {\em Rings with common division, common
  meadows and their conditional equational theories}, 2024.

\bibitem{Dinis_Bottazzi}
{\sc E.~Bottazzi and B.~Dinis}, {\em Flexible involutive meadows}, 2023.
\newblock (to appear in the Journal of Applied Logics) arXiv: 2309.01284.

\bibitem{curtis1966representation}
{\sc C.~W. Curtis and I.~Reiner}, {\em Methods of representation theory. {V}ol.
  {I}}, Pure and Applied Mathematics, John Wiley \& Sons, Inc., New York, 1981.
\newblock With applications to finite groups and orders, A Wiley-Interscience
  Publication.

\bibitem{Dias_Dinis(23)}
{\sc J.~Dias and B.~Dinis}, {\em Strolling through common meadows},
  Communications in Algebra,  (2024), pp.~1--28.

\bibitem{Dias_Dinis(24)}
{\sc J.~Dias and B.~Dinis}, {\em Towards an enumeration of finite common
  meadows}, International Journal of Algebra and Computation,  (2024).

\bibitem{eisenbud2013commutative}
{\sc D.~Eisenbud}, {\em Commutative algebra: with a view toward algebraic
  geometry}, vol.~150, Springer Science \& Business Media, 2013.

\bibitem{hungerford2012algebra}
{\sc T.~W. Hungerford}, {\em Algebra}, vol.~73, Springer Science \& Business
  Media, 2012.

\end{thebibliography}
\subsection*{Acknowledgments}
Both authors acknowledge the support of FCT - Funda\c{c}\~ao para a Ci\^{e}ncia e Tecnologia under the project: 10.54499/UIDB/04674/2020, and the research center CIMA -- Centro de Investigação em Matemática e Aplicações. 

The second author also acknowledges the support of CMAFcIO -- Centro de Matem\'{a}tica, Aplica\c{c}\~{o}es Fundamentais e Investiga\c{c}\~{a}o Operacional under the project UIDP/04561/2020.

\end{document}